\theoremstyle{plain}
\newtheorem{theorem}{Theorem}[section]
\theoremstyle{definition}
 \newtheorem*{remark}{Remark}
 \newtheorem{lemma}[theorem]{Lemma}
 \newtheorem{observation}{Observation}
 \newtheorem{defn}{Definition}
 \newcommand{\mc}{\mathcal}
\begin{document}

\title{A variational theory for point defects in patterns}

\author{N. M. Ercolani}%
\address{Dept. of Math., Univ. of Arizona, 617 N. Santa Rita Ave., Tucson, AZ 85721, USA}%
\email{ercolani@math.arizona.edu}%
\author{S.C. Venkataramani}%
\address{Dept. of Math., Univ. of Arizona, 617 N. Santa Rita Ave., Tucson, AZ 85721, USA}%
\email{shankar@math.arizona.edu}


\begin{abstract}
We derive a rigorous scaling law for minimizers in a natural
version of the regularized Cross-Newell model for pattern
formation far from threshold. These energy-minimizing solutions
support defects having the same character as what is seen in
experimental studies of the corresponding physical systems and in
numerical simulations of the microscopic equations that describe
these systems.
\end{abstract}

\thanks{\textit{A Variational Theory for Point Defects in Patterns}}

\maketitle

\section{Introduction}

This paper reports on some recent progress that has been made in
the analytical modeling of defect formation, far from threshold,
in pattern forming physical systems. We will take a moment here to
very briefly sketch the physical and mathematical background that
motivates what is done in this paper.
\smallskip

The relevant class of pattern-forming physical systems to consider
are those in which the spatial physical field can be described as
planar and the first bifurcation from a homogeneous state, having
arbitrary translational symmetry in the plane, produces a striped
pattern which has only a discrete periodic symmetry in one
direction. This \emph{symmetry-breaking} occurs at a critical
threshold; above this threshold the pattern can deform and,
further away, \emph{defects} can form. It is the desire to
understand and model this process of defect formation that
motivates our study.

A good particular example of these kinds of physical systems is a
high Prandtl number Rayleigh-B\'enard convection experiment. The
critical threshold in this example is the critical Rayleigh number
at which fluid convection is initiated from the sub-threshold
homogeneous conducting state. The "striped pattern" here can be
taken to be the horizontal cross-section of the temperature field
at the vertical midpoint of the experimental cell in which
\emph{convection rolls} have formed.

Because of its periodic structure, the striped pattern can be
described in terms of a periodic form function of a \emph{phase},
$\theta = \vec{k}\cdot\vec{x}$, where the magnitude of $\vec{k}$
is the wavenumber of the pattern and the orientation of $\vec{k}$
is perpendicular to the stripes. Here $\vec{x}=(x,y)$ is a
physical point in the plane. Even though the striped pattern will
deform far from threshold, over most of the field (and in
particular away from defects) it can be locally approximated as a
function of a well-defined phase, $\theta(\vec{x})$, for which a
local wavevector can be defined as $\vec{k} = \nabla\theta$ which
differs little from a constant vector unless one varies over
distances on the order of many stripes in the pattern. This
slowly-varying feature of pattern formation far from threshold
motivates the introduction of a \emph{modulational} ansatz in the
microscopic equations describing these physical systems from which
an order parameter equation for the behavior of the phase can be
formally derived. This was originally done by Cross and Newell
\cite{CN}. These equations are variational and from our
perspective it is advantageous to study their solutions by
studying the behavior of the minimizers of the variational
problem. The version of the variational problem that we study
corresponds to the following energy functional on a given domain
$\Omega$ with specified Dirichlet boundary values.

\begin{equation} \label{eq:gl}
{\mathcal{E}}^\mu(\Theta) = \mu \int_\Omega \left(\Delta_{\vec{X}}
 \Theta \right)^2 d\vec{X}
       + \frac{1}{\mu} \int_\Omega (1 - |\nabla_{\vec{X}} \Theta|^2)^2\, d\vec{X}\ ,
\end{equation}
which is expressed in terms of \emph{slow} variables stemming from
the modulational ansatz mentioned above: $\vec{X} = (X,Y) =
\left(\mu x, \mu y\right); \Theta = \frac{\theta}{\mu}$.

We refer to this functional as the \emph{regularized Cross-Newell}
(RCN) \emph{Energy}. It consists of two parts: a non-convex
functional of the gradient (the CN part) plus a quadratic
functional of the Hessian matrix of $\Theta$, which is the
regularizing singular perturbation. Without this regularization,
the CN variational equations admit non-physical caustic formation.
Instead, by studying the limit of minimizers of
${\mathcal{E}}^\mu$ as $\mu \to 0$, one may be able to identify
the formation of a physical defect as a limiting jump
discontinuity or other kind of singularity in the wavevector field
associated to the $\mu$-indexed family of minimizing phase fields.

For more details on what has been rather tersely outlined above,
we refer the reader to \cite{EINP} where analytical results on the
asymptotic limit of minimizers for RCN and their defects in
certain geometries are also derived. See also \cite{ET} where
further refinements and generalizations are developed. We further
mention that the variational problem associated to (\ref{eq:gl})
also arises in other physical contexts (unrelated to pattern
formation) where it is known as the \emph{Aviles-Giga energy}
\cite{AG}.
\medskip

We now turn to the focus of this paper. The kind of defects that are
seen to arise far from threshold are not supported by asymptotic
minimizers of (\ref{eq:gl}) if the class of functions over which one
is varying is restricted to be single-valued phases. In particular,
one can see for purely topological reasons that this restriction rules
out \emph{disclinations} \cite{EINP}.  In \cite{EINP2}, physical,
numerical and experimental arguments are developed which make a strong
case in support of the hypothesis that the correct order parameter
model for the phase in pattern forming systems far from threshold
should come from a variational problem admitting test functions which
are \emph{multi-valued} and in particular \emph{two-valued}. In
physical parlance this is often expressed by saying that the wavefield
$\vec{k}$ should be allowed to be a \emph{director field}; i.e. an
unoriented vector field. One figure (see Fig.~\ref{fig:sh-zip} below)
from \cite{EINP2} will help to crystallize the issue and the focus of
this paper.

\begin{figure}[htbp]

\centerline{\includegraphics[width = 0.9\hsize, angle=90]{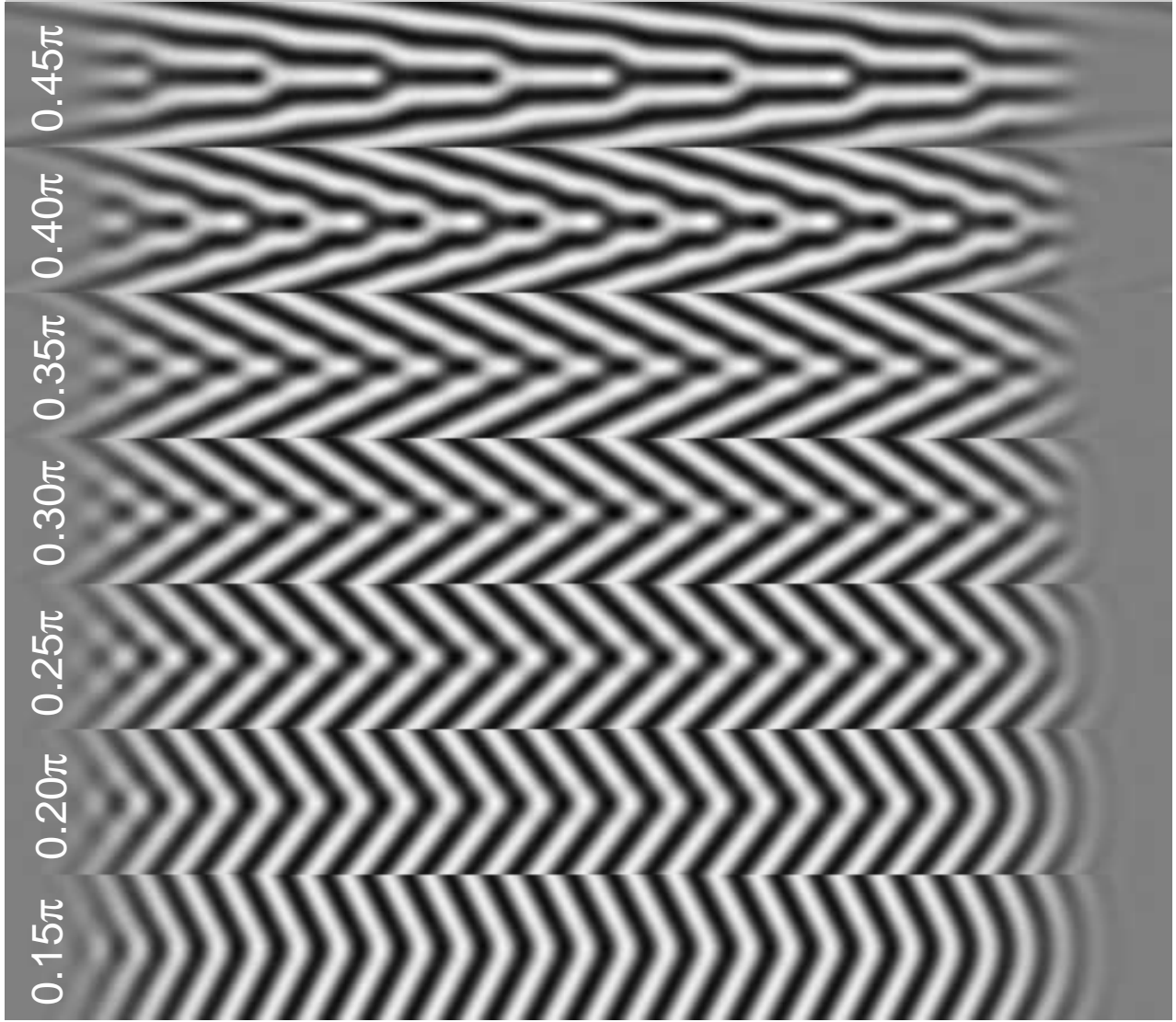}}
\caption{The ``Swift-Hohenberg'' zippers. The patterns are determined
by minimizing the Swift-Hohenberg energy functional for various
choices of the angle $\alpha$ that detemines the slopes of the stripe
patterns as $y \rightarrow \pm \infty$.}
\label{fig:sh-zip}
\end{figure}


This figure shows seven numerical simulations, each done in a
horizontal strip, of a solution to the \emph{Swift-Hohenberg
equation} which is a generic model of microscopic equations for a
pattern forming system. Each of these is run far from theshold but
with differing boundary conditions imposed at the edges. In each
case the boundary conditions impose a constant orientation of the
stripe at the edges such that the normal to the stripe is
$(\cos(\alpha), \sin(\alpha))$ along the top edge and
$(\cos(\alpha), - \sin(\alpha))$ along the bottom edge. The only
thing that changes from one simulation to the next is the value of
$\alpha$ which in the figure is recorded on the left in each
respective cell. The results of \cite{EINP} together with symmetry
considerations establish that for an analogous domain and boundary
values, the asymptotic minimizers of (\ref{eq:gl}), within the
class of single-valued phases, should have the form shown in the
bottom-most cell of Figure (\ref{fig:sh-zip}). That is, they should
have wavevectors very close to $(\cos(\alpha), \sin(\alpha))$ in
the upper region of the cell and very close to $(\cos(\alpha),
-\sin(\alpha))$ in the lower region of the cell with a boundary
layer around the mid-line in which the wavevector transitions
smoothly but rapidly from one state to the other. These minimizers
are dubbed \emph{knee solutions} in \cite{EINP} and in the limit
as $\mu \to 0$, they tend to a configuration in which there is a
sharp jump in the wavevector along the mid-line. This kind of
defect is called a \emph{grain boundary}. In other words, the
theory for (\ref{eq:gl}) with single-valued phases predicts that
the grain boundary should be the limiting defect independent of
the value of $\alpha$. The different result appearing in Figure
(\ref{fig:sh-zip}) was one of the pieces of evidence sited in
\cite{EINP2} to argue the necessity for the larger variational
class of multi-valued phases, even in such simple geometries as
those of Figure (\ref{fig:sh-zip}). In this paper we are going to
carry out a careful analytical study of the RCN variational
problem in exactly this geometry but within a larger class of
two-valued phases. We will firmly establish that the form of the
asymptotic minimizers in this more general model does in fact
depend non-trivially on $\alpha$. In addition, the construction of
test functions in section \ref{sec:u_bound} and the numerical
simulations in section \ref{sec:results} gives some intuitive and
experimental support to the belief that the stable solutions of
the RCN equations qualitatively resemble what is seen in the
Swift-Hohenberg simulations. In \cite{EINP2}, the term
Swift-Hohenberg "zippers" was coined to refer to the problem
studied in Figure (\ref{fig:sh-zip}). In this paper we will be
studying \emph{Cross-Newell zippers}.

\section{Setup} \label{sec:prelim}

We are given an angle $\alpha$ that determines the boundary
conditions on the pattern as $y \rightarrow \pm \infty$ by
$$
\nabla \theta \rightarrow (\cos(\alpha), \pm \sin(\alpha)) \text{ as } y \rightarrow \pm \infty.
$$
Note that this differs from the setup underlying the
Swift-Hohenberg zippers in that the boundary conditions are placed
at $\pm \infty$ in the $y$-direction rather than at finite values
of $y$. This simplifies our technical considerations in that we
don't need to worry about adjusting the location of these
boundaries as $\alpha$ changes.  Also, all of the patterns we want
to consider here are \emph{shift-periodic} in the $x$-direction.
This allows us to reduce our study to domains that are periodic in
$x$. We introduce the (small) parameter $\epsilon = \cos(\alpha)$
and we define the period $l = \pi/\epsilon$. We consider the
following variational problem on the strip $\mc{S}^\epsilon \equiv
\{(x,y) | 0 \leq x < l, y\geq 0\}$:

Minimize $\mc{F}^\epsilon[\theta;a,\delta]$ given by
$$
\mc{F}^\epsilon[\theta;a,\delta] = \iint_{\mc{S}^\epsilon} \left\{[ \Delta \theta]^2 + (1 - |\nabla \theta|^2)^2 \right\} dx dy
$$
over all $a \in [0,1], \delta \in \mathbb{R}$ and $\theta$
satisfying the boundary conditions
\begin{gather}
\theta(x,0) = 0 \quad \text{ for } 0 \leq x < a l; \label{eq:bc} \\
\theta_y(x,0) = 0 \quad \text{ for } a l \leq x < l; \nonumber \\
\theta(x,y) - \epsilon x \text{ is periodic in $x$ with period $l$ for each } y \geq 0; \nonumber \\
\theta(x,y) - \left[\epsilon x + \sqrt{1 - \epsilon^2} y +
\delta\right] \in H^2(\mc{S}^\epsilon). \nonumber
\end{gather}
We take a moment here to explain the considerations that have
motivated the mixed Dirichlet-Neumann boundary conditions here,
the first two boundary conditions in (\ref{eq:bc}) above. We
argued in the introduction that in order to capture the physically
relevant minimizers, the RCN variational problem needed to allow
for multi-valued phases in its admissible class of test functions.
However, the numerical results on the Swift-Hohenberg zippers
suggest that in certain symmetrical geometries the appropriate
multi-valuedness can be introduced in a tractable fashion. Indeed
in the case of the SH zippers we see that the symmetry of the
boundary conditions between the upper and lower edges of the
domain is preserved in the symmetry of all of the exhibited
solutions about the middle horizontal axis; i.e., the reflection
in $y$ about the $y=0$ axis. This suggests that a single-valued
phase could describe the solution in the upper half-plane with the
solution in the lower half-plane given as a symmetric reflection
of that in the upper half-plane about $y=0$.

\begin{figure}[htbp]

\centerline{\includegraphics{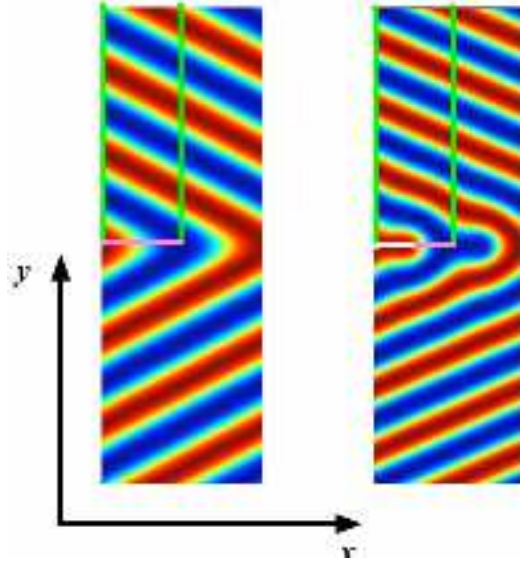}}
\caption{An illustration of the appropriate boundary conditions.}
\label{fig:bcs}
\end{figure}

Figure~\ref{fig:bcs} illustrates two instances of the form that
we expect these zippers to take in the infinite (in $y$) geometry.
The figure on the left illustrates level curves (\textit{stripes}
in the parlance of the introduction) of what we will shortly
define to be a \textit{self-dual knee solution}. This is indeed
symmetric about the mid-axis, which we will take to be the $y=0$
axis; moreover, one can see that its gradient field along $y=0$ is
tangential to this axis. Thus the gradient field in the upper
half-plane is completely symmetrical to that in the lower
half-plane under reflection about $y=0$.

However, for the striped pattern on the right in figure~\ref{fig:bcs},
this is not the case. There are regions, illustrated for example by
the darkened interval along $y=0$, where the gradient field is
tangential to this axis; but, there are other regions, illustrated for
example by the lightened interval along $y=0$, where the gradient
field needs to be perpendicular to this axis. By reflection symmetry
this field will point upwards in the upper half-plane and downward in
the lower half-plane. This cannot be supported by a vector field but
it is allowable for a director field. This indicates that in this
region a two-valued phase is required.

To get at the conditions on the phase itself we observe that
patterns of the type illustrated here are analytically given in
terms of a form function $F$ of the phase $\theta = \theta(x,y)$
such that $F$ is locally periodic of period $2\pi$ in $\theta$ and
such that $F(\theta(x,y))$ is even in $y$ and smooth in $(x,y)$.
In order to allow $\theta$ to be two-valued we also require $F$ to
be even in $\theta$. (An example of a global form function having
these properties is $F = \cos$.) It follows from these
requirements that either $\theta(x,y)$ is even in $y$, in which
case $\theta_y(x,0) = 0$, a Neumann boundary condition; or,
$\theta(x,y)$ is an odd function of $y$ modulo $\pi$, in which
case $\theta_y(x,0) = n\pi$ for some integer $\pi$, a Dirichlet
boundary condition. Thus to realize the pattern on the right in
figure~\ref{fig:bcs} in terms of a single-valued phase in the
upper half-plane, we would need to take the Neumann boundary
condition on the darkened interval and the Dirichlet boundary
condition on the lightened interval. This is what we have done in
(\ref{eq:bc}). For the self-dual knee pattern on the left we would
take the entire boundary condition to be Neumann.
\medskip

The functional $\mc{F}^\epsilon$ is the RCN energy functional but
with the scaling $\mu$ removed. It is appropriate to do this
because the demonstration that the nature of the RCN minimizers
depends on $\alpha$ is independent of this scaling. The first and
the second conditions impose a mixed Dirichlet/Neumann boundary
condition at $y = 0$, the third condition imposes (shifted-)
periodicity in $x$ and the last condition ensures that the test
functions $\theta$ approach the straight parallel roll patterns
$\epsilon x + \sqrt{1 - \epsilon^2} y + \delta$ as $y \rightarrow
\infty$.

Note that the dependence of the functional $\mc{F}^\epsilon$ on
the parameter $\epsilon$ is through the dependence of the domain
$\mc{S}^\epsilon$ and the boundary conditions on $\epsilon$.  The
parameters $a$ and $\delta$ are determined by minimization. The
parameter $a$ is a measure of the fraction of the boundary at $y =
0$ that has a Dirichlet boundary condition, and $\delta$
represents the {\em asymptotic phase shift}, that is the
difference in phases between the test function $\theta$ and the
roll pattern $\hat{\theta}(x,y) = \epsilon x + \sqrt{1 -
\epsilon^2} y$ which satisfies $\hat{\theta}(0,0) = \theta(0,0) =
0$.

The case where $a$ is set to zero is considered in earlier
references \cite{EINP}. The test functions $\theta(x,y)$ satisfy a
pure Neumann boundary condition at $y = 0$ and the minimizers in
this case are the self-dual knee solutions
$$
\theta_{neu}(x,y) = \epsilon x + \log(\cosh(\sqrt{1-\epsilon^2} y)).
$$
These solutions have an asymptotic phase shift of $-\log(2)$ and
the energy of the minimizers in the strip $\mc{S}^\epsilon$ is given
by
\begin{equation}
\mc{F}^\epsilon[\theta_{neu};0,-\log(2)] = \frac{ 4 \pi \sqrt{1-\epsilon^2}}{3 \epsilon}.
\label{eq:chevrons}
\end{equation}

The existence of $(\theta^\epsilon,a^\epsilon,\delta^\epsilon)$
minimizing $\mc{F}^\epsilon$ can be shown from the direct method in
the calculus of variations. We also prove the following results about
the minimizers, and their energy --

\begin{theorem} {\em Upper bound}

  There is a constant $E_0$ such that
  $\mc{F}^\epsilon[\theta^\epsilon;a^\epsilon,\delta^\epsilon] \leq
  E_0$ for all $\epsilon \in (0,1]$.
\label{thm:u_bound}
\end{theorem}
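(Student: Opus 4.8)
The plan is to prove this by the direct method, exhibiting a good competitor: since $(\theta^\epsilon,a^\epsilon,\delta^\epsilon)$ minimizes, it suffices to produce, for every $\epsilon\in(0,1]$, one admissible triple $(\tilde\theta,\tilde a,\tilde\delta)$ with $\mc F^\epsilon[\tilde\theta;\tilde a,\tilde\delta]\le E_0$. One may assume $\epsilon$ is small, since for $\epsilon$ bounded below the self-dual knee already gives $\mc F^\epsilon[\theta^\epsilon;\dots]\le\mc F^\epsilon[\theta_{neu};0,-\log 2]=\tfrac{4\pi\sqrt{1-\epsilon^2}}{3\epsilon}$, which is bounded. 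That same formula shows the knee is hopeless as $\epsilon\to0$, for a structural reason: its energy is comparable to $l=\pi/\epsilon$ because it carries a ``wall'' along the whole line $y=0$. A Dirichlet condition near $y=0$ carries no such wall, so a successful competitor should impose Dirichlet data on almost all of the bottom edge; I would take $\tilde a=1-\epsilon$, leaving a Neumann window $[\tilde a l,l)$ of width $l-\tilde a l=\pi$, uniformly in $\epsilon$.

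The competitor is the parallel-roll field $\hat\theta=\epsilon x+\sqrt{1-\epsilon^2}\,y+\tilde\delta$ (it is convenient to write $\tilde\theta=\hat\theta+v$ with $v$ $l$-periodic in $x$ and decaying in $y$, so that the shifted periodicity and the $H^2$ far-field condition hold automatically), altered in three places near $y=0$. (i) \emph{Dirichlet layer.} On $x\in[0,\tilde a l)$, $0\le y\le Y_0:=\epsilon^{-2}$, let $\tilde\theta$ run — through a fixed smooth profile rescaled to height $Y_0$ — from $\tilde\theta=y$ near $y=0$, which realizes $\tilde\theta(x,0)=0$ with $|\nabla\tilde\theta|=1$ and $\Delta\tilde\theta=0$, up to $\hat\theta$ at $y=Y_0$. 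Because $\epsilon x\le\pi$ throughout this strip one finds $1-|\nabla\tilde\theta|^2=O(Y_0^{-1})+O(\epsilon^2)$ and $\Delta\tilde\theta=O(Y_0^{-2})+O(\epsilon^2Y_0^{-1})$, so the layer, of area $O(lY_0)=O(\epsilon^{-3})$, contributes $O\!\left((\epsilon Y_0)^{-1}\right)=o(1)$. (ii) \emph{A regularized disclination} at the junction $P=(\tilde a l,0)$ of the Neumann and Dirichlet parts: outside a fixed core $\{r<\rho_0\}$, $r:=|(x,y)-P|$, the model field on the Neumann side is $\tilde\theta=r$, which gives $|\nabla\tilde\theta|=1$, $\tilde\theta_y(x,0)=0$ and $\tilde\theta(x,0)=x-\tilde a l$ along the window (so $\tilde\theta(l^-,0)=\pi$, consistent with the $\pi$-shift), and which meets the layer $\tilde\theta=y$ of (i) $C^1$ along $x=\tilde a l$; the core is filled by a smooth $H^2$ function at cost $O(1)$ (there $|\Delta\tilde\theta|=O(\rho_0^{-1})$ on area $O(\rho_0^2)$). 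The radial field carries no potential energy, and $\int(\Delta r)^2=\int r^{-2}$ is $O(1)$ over the region it occupies. (iii) \emph{Matching} the radial field to $\hat\theta$: along the (nearly vertical) roll direction through $P$ the two unit-gradient fields agree to leading order, and off it by angle $\beta$ they differ by $O(r\beta^2)$, so one interpolates in a wedge whose half-width decays like $r^{-1/2}$; then at radius $r$ the defect region has area $O(r^{1/2})\,dr$, and both $\int(\Delta\tilde\theta)^2$ and $\int(1-|\nabla\tilde\theta|^2)^2$ there are $O\!\left(\int r^{-3/2}\,dr\right)=O(1)$. Finally $\tilde\theta$ is $C^1$ across every interface with second derivatives bounded and supported where $\tilde\theta\ne\hat\theta$, so $\tilde\theta-\hat\theta\in H^2(\mc S^\epsilon)$; $\tilde\delta$ is fixed of order $1$ (with the $O(1)$ discrepancy between the radial field and $\hat\theta$ absorbed into (iii)). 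Summing $0+o(1)+O(1)+O(1)$ gives the uniform bound.

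The real content is step (iii) together with the bookkeeping for these extended regions. A disclination whose ``fan'' — the sector where $\nabla\tilde\theta$ has not yet aligned with the rolls — kept a fixed angular width out to radius $R$ would contribute $\sim\log R\sim\log(1/\epsilon)$, which is not a constant; it is essential that the fan close at precisely the rate $r^{-1/2}$ dictated by the competition between the radial phase and the roll phase. Likewise the Dirichlet layer and every transition region has area of order $\epsilon^{-2}$, so the estimates there are only borderline convergent, and the profiles and cutoffs must be tuned so that each region contributes $O(1)$ while the patched field still satisfies exactly the mixed Dirichlet/Neumann condition on $y=0$, the $\pi$-shift periodicity in $x$, and the $H^2$ approach to the roll state as $y\to\infty$. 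Carrying out that simultaneous construction is the main obstacle.
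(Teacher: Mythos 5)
Your strategy (a direct geometric gluing of explicit fields, in the spirit of Jin--Kohn upper-bound constructions) is a legitimate alternative to the paper's route, which instead solves the self-dual Helmholtz problem $\Delta u = u$ with modified Dirichlet data, sets $\theta=\log u$, and controls the energy by theta-function/Laplace asymptotics, exploiting the equipartition $(\Delta\theta)^2=(1-|\nabla\theta|^2)^2$. Several of your individual estimates are sound: the Dirichlet layer of height $\epsilon^{-2}$ does contribute $O(\epsilon)$, the choice $1-\tilde a\asymp\epsilon$ (so the Neumann window has fixed length $\pi$) is the right one, and the $r^{-1/2}$ closing of the fan against the rolls does give a convergent $\int r^{-3/2}\,dr$.

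However, there is a genuine gap: each period of the boundary has \emph{two} Dirichlet--Neumann junctions, and you construct a defect only at the left one, $P=(\tilde a l,0)$. Follow your field up to the right end of the window. There $\tilde\theta=r_P$ gives $\tilde\theta(l^-,y)=\sqrt{\pi^2+y^2}$ with gradient $(\pi,y)/\sqrt{\pi^2+y^2}$, while the next period's Dirichlet layer (by shift-periodicity) gives $\tilde\theta(l^+,y)=y+\pi$ with gradient $(0,1)$. Across the line $x=l$ your field therefore has an $O(1)$ jump in $\nabla\tilde\theta$ near $y=0$ and an $O(1)$ jump in $\tilde\theta$ itself for $y\sim 1$ (e.g.\ $\sqrt{2}\pi$ versus $2\pi$ at $y=\pi$), so as described it is not even in $H^2_{\mathrm{loc}}$ and is not admissible. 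This missing piece is not a mirror image of $P$ and cannot be absorbed into your step (iii): it is the \emph{convex} member of the convex--concave disclination pair, and resolving it requires its own construction --- a regularized core at $Q=(l,0)$ respecting the exact switch from Neumann ($x<l$) to Dirichlet ($\theta=\pi$, $x>l$), together with a transition layer along the curve where $r_P$ and $y+\pi$ agree (the parabola $(x-\tilde a l)^2=2\pi y+\pi^2$), across which the gradient mismatch decays like $y^{-1/2}$; one then needs an estimate (e.g.\ the cubic-in-the-jump cost of smoothing a unit-gradient wall for this functional) to show that this grain boundary contributes only $\int y^{-3/4}\cdot y^{-3/4}\,dy$-type quantities, i.e.\ $O(1)$. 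Until that second junction and its emanating interface are built and estimated, the sum ``$0+o(1)+O(1)+O(1)$'' does not account for the whole domain and the uniform bound is not established.
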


We prove this result in sec.~\ref{sec:u_bound} by exhibiting an
explicit test function satisfying this bound. Note the implication
that the minimizers for sufficiently small $\epsilon$ cannot be the
self-dual solutions, since the energy in Eq.~(\ref{eq:chevrons})
diverges as $\epsilon \rightarrow 0$. Consequently, $a^{\epsilon} > 0$
for sufficiently small $\epsilon$.

\begin{theorem} {\em Lower bound}

  There are constants $E_1 > 0$ and $\epsilon_0 > 0$ such that, even
  for the optimal test function $\theta^\epsilon$ and the optimal
  parameter values $a^\epsilon$ and $\delta^\epsilon$, we have
  $\mc{F}^\epsilon[\theta^\epsilon;a^\epsilon, \delta^\epsilon] \geq
  E_1$ for all $\epsilon \leq \epsilon_0$. Further, there are
  constants $0 < \alpha_1 < \alpha_2$ such that $1 - \alpha_2 \epsilon
  < a^\epsilon < 1 - \alpha_1 \epsilon$ for sufficiently small
  $\epsilon$.
\label{thm:l_bound}
\end{theorem}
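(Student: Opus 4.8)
The plan is to read everything off from a single compatibility identity at $y=0$, together with two localized lower bounds --- one forbidding the Neumann interval from being too long, the other from being too short. Write $l=\pi/\epsilon$, let $I^\epsilon=[a^\epsilon l,l)\subset\{y=0\}$ be the Neumann interval, $L^\epsilon=(1-a^\epsilon)l=(1-a^\epsilon)\pi/\epsilon$ its length, and $v^\epsilon=\theta^\epsilon-\hat\theta-\delta^\epsilon\in H^2(\mc{S}^\epsilon)$; the claimed window $1-\alpha_2\epsilon<a^\epsilon<1-\alpha_1\epsilon$ is exactly the statement $c_1\le L^\epsilon\le c_2$ for $\epsilon$-independent constants $c_1,c_2>0$, and the energy bound will follow from it. Since $\theta^\epsilon=\hat\theta+\delta^\epsilon+v^\epsilon$ is continuous ($H^2\hookrightarrow C^0$ in two dimensions), $\theta^\epsilon\equiv0$ on $[0,a^\epsilon l)$, and $\theta^\epsilon-\epsilon x$ is $l$-periodic, we get $\theta^\epsilon(a^\epsilon l,0)=0$ and $\theta^\epsilon(l^-,0)=\epsilon l=\pi$, hence the compatibility identity $\int_{I^\epsilon}\theta^\epsilon_x(x,0)\,dx=\pi$; moreover $\theta^\epsilon_y(x,0)=0$ on $I^\epsilon$ makes $\nabla\theta^\epsilon$ horizontal there, in contrast with $\nabla\theta^\epsilon\to(\epsilon,\sqrt{1-\epsilon^2})$ --- essentially \emph{vertical} for small $\epsilon$ --- as $y\to\infty$. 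One device is worth isolating: integrating the null Lagrangian $\det\nabla^2 v^\epsilon$ over $\mc{S}^\epsilon$ produces only boundary terms, and these all vanish (using $\theta^\epsilon_x(\cdot,0)=0$ on the Dirichlet part, $\partial_x\theta^\epsilon_y(\cdot,0)=0$ on the Neumann part, $x$-periodicity, and the $H^2$-decay as $y\to\infty$), so $\int_{\mc{S}^\epsilon}|\nabla^2 v^\epsilon|^2=\int_{\mc{S}^\epsilon}(\Delta\theta^\epsilon)^2\le\mc{F}^\epsilon[\theta^\epsilon]\le E_0$, with $E_0$ as in Theorem~\ref{thm:u_bound}: the full Hessian is controlled by the energy.

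\emph{Upper bound on $L^\epsilon$.} Split $I^\epsilon$ according to $m(x):=|\nabla\theta^\epsilon(x,0)|=|\theta^\epsilon_x(x,0)|$ into $A=\{m\le\tfrac12\}$, $C=\{m\ge\tfrac32\}$ and $B=I^\epsilon\setminus(A\cup C)$. For $x\in A\cup C$ the column $\{x\}\times(0,1)$ carries a definite amount of energy: either $|\nabla\theta^\epsilon(x,\cdot)|$ stays in the ``wrong'' range for a height $\delta(x)$, contributing $\gtrsim\delta(x)$ to $\int(1-|\nabla\theta^\epsilon|^2)^2$, or it returns quickly to the unit-circle regime, forcing $\int_0^1|\partial_y\nabla\theta^\epsilon(x,\cdot)|^2\gtrsim 1/\delta(x)$; an arithmetic--geometric mean estimate then gives $\int_0^1[\,|\partial_y\nabla\theta^\epsilon(x,\cdot)|^2+(1-|\nabla\theta^\epsilon(x,\cdot)|^2)^2\,]\,dy\ge c>0$, and integrating in $x$ and bounding $\int_{\mc{S}^\epsilon}|\partial_y\nabla\theta^\epsilon|^2$ by the Hessian bound yields $|A|+|C|\le C_0$. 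On $B$, $\nabla\theta^\epsilon(x,0)$ lies within bounded distance of the unit circle and is essentially horizontal, yet tends to the essentially vertical unit vector $(\epsilon,\sqrt{1-\epsilon^2})$ as $y\to\infty$; since $\nabla\theta^\epsilon$ is curl-free it cannot rotate by $\approx\pi/2$ for free, and an Aviles--Giga ``wall'' lower bound --- the mechanism responsible for the density $\tfrac43$ in (\ref{eq:chevrons}) --- gives $\mc{F}^\epsilon[\theta^\epsilon]\ge c_B|B|$. With $\mc{F}^\epsilon[\theta^\epsilon]\le E_0$ we get $|B|\le E_0/c_B$, so $L^\epsilon=|A|+|C|+|B|\le C_0+E_0/c_B=:c_2$, i.e.\ $a^\epsilon>1-(c_2/\pi)\epsilon$.

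\emph{Lower bound on $L^\epsilon$, and the energy bound.} Here $\pi$ units of phase cannot be squeezed into a short interval: since $L^\epsilon\le c_2$, the set $I^\epsilon\times(1,2)$ has area $\le c_2$, so $\int(1-|\nabla\theta^\epsilon|^2)^2\le E_0$ gives a uniform $L^4$-bound for $\nabla\theta^\epsilon$ on it, hence a height $h_\ast\in(1,2)$ with $|\int_{I^\epsilon}\theta^\epsilon_x(x,h_\ast)\,dx|\le C_1(L^\epsilon)^{3/4}$; subtracting from the compatibility identity and using the fundamental theorem of calculus in $y$, Cauchy--Schwarz, and the Hessian bound, $\pi\le C_1(L^\epsilon)^{3/4}+\iint_{I^\epsilon\times(0,2)}|\theta^\epsilon_{xy}|\le C_1(L^\epsilon)^{3/4}+\sqrt{2E_0\,L^\epsilon}$, whose right-hand side tends to $0$ with $L^\epsilon$; hence $L^\epsilon\ge c_1>0$. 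The energy bound then follows by compactness: if $\mc{F}^{\epsilon_n}[\theta^{\epsilon_n}]\to0$ along some $\epsilon_n\to0$, then (using $L^{\epsilon_n}\le c_2$) after recentering the junction $x=a^{\epsilon_n}l$ at the origin the $\theta^{\epsilon_n}$ are bounded in $W^{1,4}$ on a fixed bounded window, with $\nabla^2\theta^{\epsilon_n}\to0$ in $L^2$ and $|\nabla\theta^{\epsilon_n}|\to1$; a subsequential limit is therefore affine with unit gradient, vanishes on the Dirichlet side $x<0$ of $\{y=0\}$ --- so equals $\theta_\infty\equiv y$ --- yet, by the compatibility identity, passes through the value $\pi$ at horizontal distance $L^{\epsilon_n}$ from the junction; letting $n\to\infty$ gives $0=\pi$, a contradiction. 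Thus $\mc{F}^\epsilon[\theta^\epsilon]\ge E_1>0$ for all $\epsilon\le\epsilon_0$.

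The crux is the Aviles--Giga lower bound in the ``case $B$'' step: the rotation cost must be $\ge c_B|B|$ with $c_B$ independent of $\epsilon$ and of the a priori irregular set $B\subset[0,l)$, on a domain unbounded in $y$ and carrying mixed Dirichlet/Neumann data. The natural approach is a div--curl (entropy) estimate on the full strip $\mc{S}^\epsilon$, where the periodic lateral fluxes cancel: one seeks an admissible entropy $\Sigma$ whose normal flux vanishes on the Dirichlet part of $\{y=0\}$ and, to second order, at $y=+\infty$ (so that the contribution $l\,\Sigma_2(\epsilon,\sqrt{1-\epsilon^2})$ there is $o(1)$), leaving only the Neumann flux $-\int_{I^\epsilon}\Sigma_2(\theta^\epsilon_x(x,0),0)\,dx$, which must then be bounded below by $c_B|B|-O(1)$; producing such a $\Sigma$ within the Jin--Kohn / DeSimone--Kohn--M\"uller--Otto family, and controlling its contribution over the exceptional set $C$, is the delicate part. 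Secondary technical points are the uniform decay and regularity of $v^\epsilon$ as $y\to\infty$ (used to justify the vanishing of the null-Lagrangian boundary terms and the $L^4$ estimate above) and the two Dirichlet--Neumann junctions per period, at which $\nabla\theta^\epsilon$ is pinned to zero by the $H^{1/2}$ regularity of its trace and which must be excised from the column-by-column arguments.
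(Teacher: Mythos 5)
Your overall framework is the right one --- Jin--Kohn entropies plus the boundary data --- and two of your ingredients are genuinely different from the paper's and would work. The compatibility identity $\int_{a\ell}^{\ell}\theta_x(x,0)\,dx=\pi$, combined with the Hessian bound and an $L^4$ slice estimate, is a legitimately more elementary route to $1-a^\epsilon\gtrsim\epsilon$ than the paper's, which instead builds a second, $b$-scaled entropy $\Sigma_2(p,q)=p\,\phi(b^2p^2)$ with $b=(1-a)/\epsilon$ and obtains the quantitative bound $\mc{F}^\epsilon\geq e_1/b^2-K_1\epsilon^2$ (Lemma~\ref{lem:squeeze}); note that the paper's version is a lower bound on the energy valid for \emph{all} test functions, which it then needs in order to take the max of the two lemma bounds and extract $E_1$, whereas your version only constrains the minimizer and forces you into the separate compactness argument for $E_1$. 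That compactness argument is essentially sound, though it leans on the upper bound for $L^\epsilon$ to keep the point where $\theta=\pi$ inside a fixed window.

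The genuine gap is exactly where you flag it: the upper bound $L^\epsilon\leq c_2$, equivalently the estimate $\mc{F}^\epsilon\gtrsim(1-a)/\epsilon$, which is the heart of the theorem. Your column argument handles only the sets $A$ and $C$ where $|\nabla\theta|$ is far from $1$; for the main set $B$ you reduce to ``an Aviles--Giga wall lower bound'' and then state that producing the required entropy is ``the delicate part'' without producing it. That construction is the bulk of the paper's proof (Lemma~\ref{lem:extend}, via the functions $\psi,\zeta,\sigma,\varphi$ and the potential $V$ of (\ref{eq:defnv})), and the specification you give for it is actually harder to meet than necessary: you ask for the Neumann flux $-\int_{I^\epsilon}\Sigma_2(\theta_x(x,0),0)\,dx$ itself to be bounded below by $c_B|B|-O(1)$, which would additionally require controlling the Dirichlet flux and the flux at $y=\infty$, both over intervals of length $\sim 1/\epsilon$, against an unknown trace $\theta_x(x,0)$. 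The paper's arrangement inverts this: it chooses $\Sigma$ so that $\Sigma_2(p,0)\equiv 0$ (the Neumann flux vanishes identically, making your $A/B/C$ decomposition unnecessary), so that $\Sigma_2(0,q)\leq M=\Sigma_2(0,1)$ (the Dirichlet flux is at most $Ma\ell$), and so that $\Sigma_2(\epsilon,\sqrt{1-\epsilon^2})\geq M-K\epsilon^2$ (the flux at infinity is $\geq M\ell-K\pi\epsilon$); the net divergence is then $\geq M(1-a)\ell-K\pi\epsilon$, which is the wall cost you want. Without an explicit entropy of this kind --- or a complete substitute for the case-$B$ bound --- the argument does not close.
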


We prove this result in sec.~\ref{sec:l_bound}. Combining this
result with the preceding theorem, we obtain a rigorous scaling
law for the energy of the minimizer, and for the quantity $(1-a)$
as $\epsilon \rightarrow 0$. As a corollary to
Theorem~\ref{thm:l_bound}, we find that an $O(1)$ part of the
energy of the minimizer concentrates on the set, $a l \leq x \leq
l, 0 \leq y \leq 1$. This can be interpreted as saying that a
nontrivial part of the energy of the minimizer lives in the region
of the convex-concave disclination pair \cite{EINP2}.

\section{Upper bound} \label{sec:u_bound}

We will first show an upper bound for the enrgy functional
$\mc{F}^{\epsilon}$, uniform in $\epsilon$, by constructing a
family of explicit test function whose energy is uniformly
bounded. The idea for the construction of these test functions
comes from the self-dual ansatz \cite{EINP} which requires that
the energy density of the functional $\mathcal{F}$ should be
\emph{equi-partitioned} between its two terms. Functions
satisfying this ansatz solve the self-dual (resp., anti-self-dual)
equation:
\begin{equation} \label{eq:selfdual}
\Delta\theta =\pm (1 - |\nabla\theta|^2).
\end{equation}
Solutions of this equation can be constructed via the logarithmic
transform
$$
\theta = \pm \log u
$$
which reduces (\ref{eq:selfdual}) to the linear Helmholtz equation
(\ref{selfdual}). We refer the reader to \cite{EINP, ET} for more
background on self-dual reduction.

\subsection{Self-dual test functions for the CN-Zipper
problem}\label{zipper}
\subsubsection{Existence}

We consider the Helmholtz equation in the upper half-plane,
\begin{equation}\label{selfdual}
\Delta u - u = 0
\end{equation}
subject to the mixed boundary conditions
\begin{eqnarray}
u(x,0)   &=& e^{-n\pi} \,\,\, n\ell < x < (n+a)\ell \label{2} \\
u_y(x,0) &=& 0 \,\,\,\,\,\, (n+a)\ell \leq x \leq (n+1)\ell
\label{3}
\end{eqnarray}
and with asymptotic behavior for large $y$ given by const.
$\exp(-\epsilon x - \sqrt{1-\epsilon^2} y)$ where $\ell =
\pi/\epsilon$ and $a \in (0,1)$.

We seek a shift-periodic solution, meaning that we change
variables to $w = e^{\epsilon x} u(x,y)$ and look for periodic
solutions of
\begin{equation}\label{shiftper}
Lw = \Delta w -2\epsilon\partial_x w -(1-\epsilon^2)w=0,
\end{equation}
with boundary conditions of periodicity in $x$ of period $\ell$;
mixed boundary conditions at $y=0$,
\begin{eqnarray}
w(x,0)   &=& e^{\epsilon x} \,\,\,\,\,\, 0 < x < a\ell \label{5}\\
w_y(x,0) &=& 0 \,\,\,\,\,\, a\ell \leq x \leq \ell \label{6};
\end{eqnarray}
and with asymptotic behavior for large $y$ given by const. $\exp(-
\sqrt{1-\epsilon^2} y)$. Given such a $u$, $\theta = -\log u$
would satisfy the boundary conditions (\ref{eq:bc}). (However, for
notational simplicity, in the remainder of this section we will
set $\theta = \log u$.)

We now let $\mc{S}^\epsilon$ denote the half-cylindrical domain,
$\ell$-periodic in $x$ and with $y>0$. The existence of a weak
solution to (\ref{shiftper}) satisfying the above boundary
conditions can be established via the Lax-Milgram theorem with
appropriate energy estimates. However, in order to derive uniform
asymptotic energy estimates (as $\epsilon \to 0$) for the CN
Zipper problem we need to go beyond existence results and try to
construct a more explicit representation of the solution to
(\ref{selfdual}). Unfortunately, at present, the solutions one can
construct using Greens function methods and the like do not yield
sufficient a priori boundary regularity near $y=0$ to control the
asymptotic behavior of the energy in this \textsl{finite} part of
$\mc{S}^\epsilon$. We will therefore instead study solutions of a
self-dual problem with modified boundary conditions (more
precisely, with pure Dirichlet boundary conditions). Subsequently
we will make a local modification of these solutions near the
boundary to produce functions (no longer global self-dual
solutions) whose asymptotic energy we can control \emph{and} which
are valid test functions for the Cross-Newell Zipper problem.

The modified boundary value problem we consider is
(\ref{shiftper}) with (\ref{5}-\ref{6}) replaced by the pure
Dirichlet boundary condition

$$
w(x,0)   = \left\{\begin{array}{c}
   e^{\epsilon x} \,\,\,\,\,\, 0 < x < a\ell \\
   q_a(x)  \,\,\,\,\,\, a\ell \leq x \leq \ell \\
\end{array}\right. \leqno{\begin{array}{c}
  (\ref{5}^\prime)\\
  (\ref{6}^\prime)\\
\end{array}}
$$
where $q_a(x)$ is a function which smoothly interpolates, up
through second derivatives, between $e^{\epsilon x}$ at $x=a\ell$
on the left and $e^{\epsilon x - \pi}$ at $x=\ell$ on the right.
There are clearly many choices for such a function; the precise
choice for our purposes will be made later at the end of
subsection \ref{energy}. By elliptic regularity \cite{Evans}, the
solution to this boundary value problem satisfies $w(x,y) \in
H^2\left(\mc{S}^\epsilon\right)$. In the following sections we
will construct the solutions to this problem and study its
asymptotics relative to the RCN energy $\mathcal{F}^\epsilon$.

\subsubsection{Explicit Construction}

The whole plane Green's function for the Helmholtz equation
(\ref{selfdual}) is explicitly given in terms of the Bessel
potential \cite{Evans}:
\begin{equation}\label{Bessel}
{G}(x,y;\xi,\eta)=\frac{1}{4\pi} \int_0^\infty e^{-t} \frac{dt}{t}
\exp\left( -\frac{1}{4t}\{(x-\xi)^2 + (y-\eta)^2\}\right).
\end{equation}

In terms of this Green's function we can then represent a solution
to (\ref{selfdual}), with asymptotic behavior for large $y$ given
by const. $\exp(-\epsilon x - \sqrt{1-\epsilon^2} |y|)$, as

\begin{eqnarray}\label{soln}
u^\epsilon(x,y) &=& \int_{-\infty}^\infty
\rho^\epsilon(\xi){G}(x,y;\xi,0) d\xi.
\end{eqnarray}

Note that

\begin{eqnarray}\label{dirbase}
  u_y^\epsilon(x,y)  &=& \int_{-\infty}^\infty
\rho^\epsilon(\xi){G}_y(x,y;\xi,0) d\xi\\
\nonumber &=& -\int_{-\infty}^\infty
\rho^\epsilon(\xi){G}_\eta(x,y;\xi,0) d\xi
\end{eqnarray}
solves (\ref{selfdual}) with respect to the standard Dirichlet
boundary condition which equals minus the jump of $u_y^\epsilon$
along the $x$-axis. One may check directly (see (\ref{FT})) that
in fact $\rho^\epsilon(\xi) = -2 u_y^\epsilon(\xi,0)$ almost
everywhere. Integrating (\ref{dirbase}) with respect to $y$ gives

\begin{eqnarray}\label{intdirbase}
  u^\epsilon(x,y)  + f(x) &=& \int_{-\infty}^\infty
\rho^\epsilon(\xi){G}(x,y;\xi,0) d\xi.
\end{eqnarray}
Since both $u^\epsilon(x,y)$ and the RHS of (\ref{intdirbase})
decay as $y\uparrow \infty$, it follows that $f(x)\equiv 0$. This
is consistent with the ansatz (\ref{soln}), taking
$\rho^\epsilon(\xi)$ to be the jump in the normal derivative of
$u^\epsilon$ along $y=0$.
\medskip

We make the following shift-periodic ansatz for $\rho^\epsilon$,

$$
\rho^\epsilon(\xi+\ell)e^{\epsilon(\xi+\ell)} =
\rho^\epsilon(\xi)e^{\epsilon\xi}.
$$
With this one can expand out (\ref{soln}) more explicitly as

\noindent \,\,$u^\epsilon(x,y)=$
\begin{eqnarray}
   &=& \frac{1}{4\pi}\sum_{n \in
\textbf{Z}} \int_0^\infty\frac{dt}{t} e^{-(t+\frac{y^2}{4t})}
\int_{n\ell}^{(n+1)\ell} \rho^\epsilon(\xi)
\exp\left(\frac{(x-\xi)^2}{-4t}\right) d\xi  \label{11}\\
  &=& \frac{1}{4\pi}\sum_{n \in \textbf{Z}}
\int_0^\infty\frac{dt}{t} e^{-(t+\frac{y^2}{4t})} \int_{0}^{\ell}
e^{-n\pi}\rho^\epsilon(\xi)
\exp\left(\frac{(x-(\xi + n\ell))^2}{-4t}\right) d\xi \label{12}\\
&=& \frac{1}{4\pi} \int_0^\infty\frac{dt}{t}
e^{-(t+\frac{y^2}{4t})} \int_{0}^{\ell}\rho^\epsilon(\xi) \sum_{n
\in \textbf{Z}}e^{-n\pi} \exp\left(\frac{(x-(\xi +
n\ell))^2}{-4t}\right) d\xi
\label{13}\\
&=& \frac{e^{-\epsilon x}}{4\pi} \int_0^\infty\frac{dt}{t}
e^{-((1-\epsilon^2)t+\frac{y^2}{4t})}
\int_{0}^{\ell}d\xi\rho^\epsilon(\xi)e^{\epsilon\xi}\sum_{n \in
\textbf{Z}} \exp\left(\frac{((x-2\epsilon t)-(\xi +
n\ell))^2}{-4t}\right)
\label{14}\\
&=& \frac{e^{-\epsilon x}}{\sqrt{4\pi}}
\int_0^\infty\frac{dt}{t^\frac{1}{2}}
e^{-((1-\epsilon^2)t+\frac{y^2}{4t})}
\frac{1}{\ell}\int_{0}^{\ell} \rho^\epsilon(\xi) e^{\epsilon \xi}
\,\,\vartheta_3\left(\frac{-(x - \xi) + 2\epsilon t}{\ell},
\frac{-4\pi t}{\ell^2}\right)
  d\xi \label{15}
\end{eqnarray}
In (\ref{11}) we have interchanged the order of integration which
is justified by Tonelli's Theorem; in (\ref{12}) we've made the
substitution $\xi = \xi_n + n\ell$ and in (\ref{13}) we've
commuted the sum past the integrals which is justified by monotone
convergence--all terms in the series are positive and hence the
partial sums are monotonic. In (\ref{14}) we write each summand as
a single exponential and then appropriately complete the square in
each exponent. Finally in (\ref{15}) we apply Jacobi's identity
\cite{WW}. Here $\vartheta_3$ is one of the Jacobi theta
functions, in this setting explicitly given as
\begin{eqnarray} \label{Jacobi}
\vartheta_3\left(\frac{- x + 2\epsilon t}{\ell}, \frac{-4\pi
t}{\ell^2}\right) &=& 1 + 2\sum_{n=1}^\infty
e^{-\left(\frac{2\pi}{\ell}\right)^2 n^2 t} \cos\left(\frac{2\pi
n}{\ell}\left(x-\frac{2\pi t}{\ell}\right) \right)
\end{eqnarray}

Finally, from (\ref{15}) we can express our candidate for the
solution to (\ref{shiftper}), ($\ref{5}^\prime - \ref{6}^\prime$)
as
\begin{eqnarray}
{w^\epsilon}(x,y) = \frac{1}{\sqrt{4\pi}}
\int_0^\infty\frac{dt}{t^\frac{1}{2}}
e^{-((1-\epsilon^2)t+\frac{y^2}{4t})}
\frac{1}{\ell}\int_{0}^{\ell} p^\epsilon(\xi)
\,\,\vartheta_3\left(\frac{-(x - \xi) + 2\epsilon t}{\ell},
\frac{-4\pi t}{\ell^2}\right)
  d\xi,\label{shiftpersoln}
\end{eqnarray}
where $p^\epsilon(\xi) = \rho^\epsilon(\xi) e^{\epsilon \xi}$.

\subsubsection{Data Characterization, periodized and in
Fourier Space}

 From the previous sections we have that $p^\epsilon(\xi)$ is
periodic of period $\ell$; also ${w^\epsilon}(x,y)$ is periodic in
$x$ of period $\ell$ and $=e^{\epsilon x}$ along $(0, a\ell)$ when
$y=0$.

Moreover, taking the Fourier transform of (\ref{shiftpersoln}) one
finds that the Fourier coefficients, in $x$, must satisfy
\begin{eqnarray} \label{FT}
   \{\widehat{w^\epsilon(x,y)}\}(n,y) &=& \frac{1}{2}\frac{1}{\sqrt{1+\epsilon^2(2n+i)^2}}{\{\widehat{p^\epsilon(\xi)}\}}(n)e^{-\sqrt{1+\epsilon^2(2n+i)^2}y}
\end{eqnarray}
for each value of $y$. Taking the limit as $y\to 0$ on both sides
of (\ref{FT}) gives
\begin{eqnarray} \label{FTsoln}
   \{\widehat{w^\epsilon(x,0)}\}(n) &=&
\frac{1}{2}\frac{1}{\sqrt{1+\epsilon^2(2n+i)^2}}{\{\widehat{p^\epsilon(\xi)}\}}(n).
\end{eqnarray}
This is a determining conditions for $p^\epsilon(\xi)$. We note
that differentiating (\ref{FT}) with respect to $y$ and setting $y
= 0$ demonstrates that $p^\epsilon(x) = - 2 w^\epsilon_y(x,0)$, at
least in the $L^2$ sense.

Since $w^\epsilon(x,0) \in H^{2}(S^1)$, it follows, by comparison,
that $2\sqrt{1+ \epsilon^2(2n+i)^2}\widehat{w^\epsilon}(n)\in
h^1(\mathbb{Z})$. Given this we can now define
\begin{equation} \label{Besspot}
p^\epsilon(x) \doteq \left\{2\sqrt{1+
\epsilon^2(2n+i)^2}\widehat{w^\epsilon}(n)\right\}^\vee(x)
\end{equation}
which characterizes $p^\epsilon$ as an element of $H^1(S^1)$. It
follows from Sobolev's lemma \cite{Evans} that $p^\epsilon$ can be
taken to be continuous. This last observation also justifies the
existence of the Fourier coefficients
$\{\widehat{p^\epsilon}\}(n)$ that were formally introduced in
(\ref{FT}).

\subsubsection{Large $y$ asymptotics}\label{largey} We now
determine the large $y$ asymptotics of (\ref{soln}). By
(\ref{FT}), $w^\epsilon$ has a Fourier representation given by
\begin{eqnarray*}
{w^\epsilon}(x,y) &=& \sum_{n \in \mathbb{Z}}
\widehat{w^\epsilon}(n) e^{-\sqrt{1+\epsilon^2(2n+i)^2}y}
e^{\frac{2\pi i nx}{\ell}}\\
&=& \widehat{w^\epsilon}(0) e^{-\sqrt{1-\epsilon^2}y} +
\mathcal{O}\left(e^{-2 \sqrt{1 + 3\epsilon^2} y}\right).
\end{eqnarray*}
(We note that for large $y$ this series converges uniformly to a
smooth, in fact real-analytic, function of $x$.) Moreover,
$w^\epsilon(0) = \frac{1}{\ell}\int_{0}^{\ell}w^\epsilon(x,0) dx$
is non-zero since by the maximum principle \cite{Evans} applied to
the elliptic PDE (\ref{shiftper}) on the cylinder $[0,\ell] \times
\left(-\infty, \infty\right)$, the integrand, $w^\epsilon(x,0)$,
is non-negative and in fact, by ($\ref{5}^\prime-\ref{6}^\prime$),
non-vanishing on $[0,\ell]$ (the definition of $q_a$ which we give
later will insure that this is so).

\subsection{Energy Estimates}\label{energy}

We will now try to show that the regularized Cross-Newell energy
of $\theta(x,y) = \log u(x,y)$ is uniformly bounded in $\epsilon$.
This would establish a uniform (in $\epsilon$) upper bound for the
energy minimizers. Recall that the energy is calculated by
integrating the energy density over the domain $\mc{S}^\epsilon$.
Making this estimate breaks naturally into the consideration of
two regions: $[0, \ell] \times \{y\geq M_\epsilon\}$ and $[0,
\ell] \times \{y < M_\epsilon\}$ where $M_\epsilon$ is to be
determined.

We remark that the so-called "knee solution" of the self-dual
equation provides an upper bound for the energy for values of
$\epsilon$ bounded away from zero. So we only need to be concerned
with small values of $\epsilon$. Since $u^\epsilon(x,y) =
e^{-\epsilon x}w^\epsilon(x,y)$ solves the Helmholtz equation, it
will suffice to bound the density
$(1-|\nabla\theta^\epsilon|^2)^2$ (since the integral of this
density equals that of $(\Delta \theta^\epsilon)^2$ for self-dual
solutions).

\subsubsection{Estimates in $[0, \ell] \times \{y\geq M_\epsilon\}$}
We begin by considering the domain for large $y$. Since

\begin{eqnarray*}
&&\nabla\theta^\epsilon(x,y) = \frac{\nabla
u^\epsilon}{u^\epsilon}(x,y) = \left(
\begin{array}{c}
  -\epsilon  \\
   0 \\
\end{array}
\right) + \frac{\nabla w^\epsilon}{w^\epsilon}(x,y),
\end{eqnarray*}
we may reduce our considerations to studying the asymptotics of
$w^\epsilon$ and its first derivatives. It will be convenient to
replace the convolution integral in (\ref{shiftpersoln}) by the
Fourier series whose coefficients are the product of the Fourier
coefficients of $p^\epsilon$ and the $\vartheta_3$ series. This
results in the following alternative representation of
$w^\epsilon$:

\begin{eqnarray*}
{w^\epsilon}(x,y) &=& \frac{1}{\sqrt{4\pi}} \sum_{n\in
\mathbb{Z}}\int_0^\infty\frac{dt}{t^\frac{1}{2}}
e^{-
\left((1+\epsilon^2(2n+i)^2)t+\frac{y^2}{4t}\right)}\widehat{p^\epsilon}
(n)e^{\frac{2\pi
i n x}{\ell}}.
\end{eqnarray*}
With the change of variables,
$$
s = \frac{t}{y}
$$
this representation takes the form
\begin{eqnarray}\label{asympw}
{w^\epsilon}(x,y) &=& \sqrt{\frac{y}{4\pi}} \sum_{n\in
\mathbb{Z}}\int_0^\infty\frac{ds}{s^\frac{1}{2}}
e^{-\frac{y}{4}\left(\frac{s}{s_n^2} +
\frac{1}{s}\right)}\widehat{p^\epsilon}(n)e^{\frac{2\pi i n
x}{\ell}},
\end{eqnarray}
where $s_n = \frac{1}{2\sqrt{1 + \epsilon^2 (2n+i)^2}}$. The
critical point of the exponent is $s=s_n$ and the expansion of the
exponent in the $nth$ term of the series near this critical point
has the form
\begin{eqnarray*}
\frac{s}{s_n^2} + \frac{1}{s} &=& \frac{2}{s_n}\left(1 +
\frac{(s-s_n)^2}{s_n^2}+
\mathcal{O}\left(\frac{(s-s_n)^3}{s_n^3}\right)\right).
\end{eqnarray*}
An asymptotic expansion in large $y$ may be developed for the
integral in each term of the series (\ref{asympw}) by the method
of Laplace. By the uniform convergence of the series (for large
$y$), the asymptotic expansion of the series is equivalent to the
sum of the asymptotic expansions from each term. We implement this
strategy to find the leading order, large $y$ behavior, and next
corrections, for $w^\epsilon$, $w_x^\epsilon$ and $w_y^\epsilon$:

\begin{eqnarray*}
   w^\epsilon(x,y) &=& \sqrt{\frac{y}{4\pi}} \sum_{n\in
\mathbb{Z}}s_n^\frac{1}{2} e^{-\frac{y}{s_n}}\int_{-1}^\infty
\frac{dz}{(1+z)^\frac{1}{2}}
e^{-
\frac{y}{2s_n}\left(z^2+\mathcal{O}(z^3)\right)}\widehat{p^\epsilon}(n)e
^{\frac{2\pi
i n
x}{\ell}}, \\
   w_x^\epsilon(x,y) &=& -2 i \epsilon \sqrt{\frac{y}{4\pi}} \sum_{n\ne
0} s_n^\frac{1}{2} e^{-\frac{y}{s_n}}\int_{-1}^\infty
\frac{dz}{(1+z)^\frac{1}{2}}
e^{-\frac{y}{2s_n}\left(z^2+\mathcal{O}(z^3)\right)}n
\widehat{p^\epsilon}(n)e^{\frac{2\pi i n
x}{\ell}}, \\
   w_y^\epsilon(x,y) &=& \frac{1}{2y}w^\epsilon -
\frac{1}{2}\sqrt{\frac{y}{4\pi}} \sum_{n\in
\mathbb{Z}}s_n^{-\frac{1}{2}} e^{-\frac{y}{s_n}}\int_{-1}^\infty
\frac{dz}{(1+z)^\frac{1}{2}}
e^{-
\frac{y}{2s_n}\left(z^2+\mathcal{O}(z^3)\right)}\widehat{p^\epsilon}(n)e
^{\frac{2\pi
i n x}{\ell}},
\end{eqnarray*}
where in the $n^{th}$ term of each series, $z=\frac{s-s_n}{s_n}$,
respectively. We can now apply Laplace's method to each term and
then observe that the dominant contributions for large $y$ come
from the $0, +1, -1$ Fourier modes. Retaining just these we derive
the following asymptotic behavior for $\nabla \log w^\epsilon$:

\begin{eqnarray*}
\frac{w_x^\epsilon}{w^\epsilon}(x,y) &=&
\frac{-4\epsilon\sqrt{1-\epsilon^2}}{\widehat{p^\epsilon}(0)}e^{-2\epsilon^2y}\Im\left(\widehat{p^\epsilon}(1)
e^{-2i \epsilon^2 y}\right) = \mathcal{O}\left(\epsilon e^{-2\epsilon^2 y}\right)\\
\frac{w_y^\epsilon}{w^\epsilon}(x,y) &=& \frac{1}{2y}
-\sqrt{1-\epsilon^2}\frac{1 +
\Re\left(\frac{\widehat{p^\epsilon}(1)}{\widehat{p^\epsilon}(0)}
e^{2i(\epsilon x - \epsilon^2 y)}\right)e^{-2\epsilon^2 y}
+\mathcal{O}\left(\epsilon^2e^{-2\epsilon^2 y}\right)}{1 +
\Re\left(\frac{\widehat{p^\epsilon}(1)}{\widehat{p^\epsilon}(0)}e^{2i(\epsilon
x - \epsilon^2 y)}\right)e^{-2\epsilon^2 y}
+\mathcal{O}\left(\epsilon^2e^{-2\epsilon^2 y}\right)}\\
  &=&
-\sqrt{1-\epsilon^2} + \frac{1}{2y} +\mathcal{O}\left(\epsilon^2
e^{-2\epsilon^2 y}\right).
\end{eqnarray*}

Based on these asymptotics we can now estimate the energy in the
large $y$ domain.
\begin{eqnarray*}
   \nabla \theta^\epsilon &=& \left(
\begin{array}{c}
  -\epsilon \\
   -\sqrt{1-\epsilon^2} \\
\end{array}
\right) + \left(
\begin{array}{c}
  \mathcal{O}\left(\epsilon e^{-2\epsilon^2 y} \right)  \\
  \mathcal{O}\left(\frac{1}{y} + \epsilon^2 e^{-2\epsilon^2 y} \right) \\
\end{array}
\right)
\end{eqnarray*}
from which it follows that
\begin{eqnarray*}
   |\nabla \theta^\epsilon|^2 &=& 1 + \mathcal{O}\left(\epsilon^2
e^{-2\epsilon^2 y}
   \right) + \mathcal{O}\left(\frac{1}{y}\right)
\end{eqnarray*}
and so the energy density
\begin{eqnarray*}
   \left(1- |\nabla \theta^\epsilon|^2\right)^2 &=&
\mathcal{O}\left(\epsilon^4 e^{-4\epsilon^2 y}
   \right) + \mathcal{O}\left(\frac{\epsilon^2}{y} e^{-2\epsilon^2 y}
   \right) + \mathcal{O}\left(\frac{1}{y^2}\right).
\end{eqnarray*}
>From this it follows that the "large $y$" part of the total energy
is bounded as
$$
\mathcal{F}^\epsilon_{y\geq M_\epsilon} \lesssim \frac{1}{\epsilon
M_\epsilon}.
$$
Thus, if we take $M_\epsilon = c/\epsilon$, this part of the total
energy will remain finite as $\epsilon \to 0$.
\medskip

\subsubsection{Estimates in $[0,\ell] \times \left\{y <M_\epsilon\right\}$}
We next turn to consideration of the energy density in the
\textsl{finite} part of the domain. To facilitate this
consideration we will sometimes make the uniformizing change of
variables $z = \epsilon \xi$ and $h = \epsilon x$ in the Jacobi
theta function (\ref{Jacobi}):
\begin{eqnarray}
\vartheta_3\left(\frac{- (x-\xi) + 2\epsilon t}{\ell}, \frac{-4\pi
t}{\ell^2}\right)=\vartheta_3\left(\frac{-(h - z) + 2\epsilon^2
t}{\pi}, \frac{-4\epsilon^2 t}{\pi}\right) \label{perJacobi}
\end{eqnarray}
In what follows we will assume that $a$ is chosen to depend on
$\epsilon$ in such a way that $1-a^\epsilon =
\mathcal{O}(\epsilon)$.
\medskip

We will make use here of the following single-layer potential
counterpart of the double-layer potential representation
(\ref{shiftpersoln}), which in fact can be deduced directly from a
change of variables in (\ref{asympw}):

$\noindent {w^\epsilon}(x,y)=$
\begin{eqnarray}
 &=& \frac{-y}{\sqrt{4\pi}}
\int_0^\infty\frac{dt}{t^\frac{3}{2}}
e^{-((1-\epsilon^2)t+\frac{y^2}{4t})}
\frac{1}{\ell}\int_{0}^{\ell} w^\epsilon(\xi,0)
\,\,\vartheta_3\left(\frac{-(x - \xi) + 2\epsilon t}{\ell},
\frac{-4\pi t}{\ell^2}\right)
  d\xi,\label{shiftpersoln2}
\end{eqnarray}

We study the asymptotic behavior of the convolution integral in
(\ref{shiftpersoln2}) for $x \in (0, a\ell)$ and for
\textit{times} $t$ of order less than $1/\epsilon$:

\begin{eqnarray}\label{convoest}
&&\frac{1}{\ell}\int_{0}^{\ell} w^\epsilon(\xi) \,\,
\vartheta_3\left(\frac{- (x-\xi) + 2\epsilon t}{\ell}, \frac{-4\pi
t}{\ell^2}\right)d\xi \\
\nonumber &=&\frac{1}{\ell}\int_{0}^{\ell} e^{\epsilon \xi} \,\,
\vartheta_3\left(\frac{- (x-\xi) + 2\epsilon t}{\ell}, \frac{-4\pi
t}{\ell^2}\right)d\xi\\
 \nonumber &+& \frac{1}{\ell}\int_{a\ell}^{\ell}
\left(w^\epsilon(\xi) - e^{\epsilon \xi}\right) \,\,
\vartheta_3\left(\frac{- (x-\xi) + 2\epsilon t}{\ell}, \frac{-4\pi
t}{\ell^2}\right)d\xi \\
\nonumber &=& \frac{1}{\pi}\int_{0}^{\pi} e^{z}
\,\,\vartheta_3\left(\frac{-(h - z) + 2\epsilon^2 t}{\pi},
\frac{-4\epsilon^2 t}{\pi}\right) dz\\
\nonumber &+& \frac{1}{\pi}\int_{0}^{\pi}
\left(q_a(\frac{z}{\epsilon}) - e^{z}\right) \,\,
\vartheta_3\left(\frac{-(h - z) +
2\epsilon^2 t}{\pi},\frac{-4\epsilon^2 t}{\pi}\right) dz \\
\nonumber &=& e^{\epsilon x} + o(\epsilon),
\end{eqnarray}
where in the third line above, the form of the integrals follows
from making the change of variables as in (\ref{perJacobi}). In
the second integral we smoothly extend $q_a(\frac{z}{\epsilon}) -
e^z$ to be zero on $(0,a\pi)$. The final line follows for $t$ of
order less than $1/\epsilon$ because in this regime the Jacobi
theta function inside the convolution behaves as a \textit{Dirac
comb} as $\epsilon \to 0$. The second term has this asymptotic
behavior because $h \in (0,a\pi)$ and the support of
$q_a(\frac{z}{\epsilon}) - e^z$ is complementary to this interval,
so that this integral decays exponentially to zero with
$\epsilon$, as with a Dirac sequence away form its support.

Based on (\ref{convoest}) we can estimate $w^\epsilon$ as
\begin{eqnarray} \label{w-limit}
{w}^\epsilon(h,y) &=& \nonumber \frac{-y}{\sqrt{4\pi}}
\left[\int_0^{1/\epsilon}\frac{dt}{t^\frac{3}{2}}
e^{-((1-\epsilon^2)t+\frac{y^2}{4t})} \left(e^{\epsilon x} +
{o}(\epsilon)\right)\right] +
\mathcal{O}\left(e^{-1/\epsilon}\right)\\
&=& e^{\epsilon x} e^{-\sqrt{1-\epsilon^2}y}+ o(\epsilon).
\end{eqnarray}
The evaluation of the previous integral may be deduced from a
basic Bessel identity (see \cite{AS} 9.6.23).

In order to estimate $\nabla\theta^\epsilon$, we also need to
estimate the $x$ and $y$ derivatives of $w^\epsilon(x,y)$. To this
end we first consider the $x$-derivative of the internal
convolution integral which equals
\begin{eqnarray}
&&\partial_x \frac{1}{\pi}\int_{0}^{\pi} \left(
q_a(\frac{z}{\epsilon}) - e^z \right)
\,\,\vartheta_3\left(\frac{-(h - z) +
2\epsilon^2 t}{\pi}, \frac{-4\epsilon^2 t}{\pi}\right) dz \\
&=& \epsilon \frac{1}{\pi}\int_{0}^{\pi} \left(
q_a(\frac{z}{\epsilon}) - e^z \right) \,\,\partial_h
\vartheta_3\left(\frac{-(h - z) + 2\epsilon^2 t}{\pi},
\frac{-4\epsilon^2 t}{\pi}\right) dz \\
  &=& \nonumber -
\epsilon\frac{1}{\pi}\int_{0}^{\pi} \left( q_a(\frac{z}{\epsilon})
- e^z \right) \,\,\partial_z\,\vartheta_3\left(\frac{-(h - z) +
2\epsilon^2 t}{\pi}, \frac{-4\epsilon^2 t}{\pi}\right) dz.
\end{eqnarray}
Integrating by parts, the above derivative may be rewritten as
\begin{eqnarray}
\label{xderiv}&& \frac{1}{\pi}\int_{0}^{\pi} \epsilon\partial_z
\left( q_a(\frac{z}{\epsilon}) - e^z \right)\,
\,\,\vartheta_3\left(\frac{-(h - z) +
2\epsilon^2 t}{\pi}, \frac{-4\epsilon^2 t}{\pi}\right) dz\\
&=& \nonumber o(\epsilon)\,\, \mbox{for}\,\,\, h \in (0,a\pi),
\end{eqnarray}
as for the second integral in the last line of (\ref{convoest}).
Thus,
\begin{eqnarray}
\nonumber \frac{u^\epsilon_x}{u^\epsilon} &=& -\epsilon +
\frac{\frac{y}{\sqrt{4\pi}} \int_0^\infty\frac{dt}{t^\frac{3}{2}}
e^{-((1-\epsilon^2)t+\frac{y^2}{4t})} \frac{1}{\pi}\int_{0}^{\pi}
  \epsilon\partial_z \left( q_a(\frac{z}{\epsilon}) -
e^z \right)\,\,\vartheta_3\left(\frac{-(h - z) + 2\epsilon^2
t}{\pi}, \frac{-4\epsilon^2 t}{\pi}\right)
  dz}{\frac{y}{\sqrt{4\pi}}
\int_0^\infty\frac{dt}{t^\frac{3}{2}}
e^{-((1-\epsilon^2)t+\frac{y^2}{4t})} \frac{1}{\pi}\int_{0}^{\pi}
  \left( q_a(\frac{z}{\epsilon}) -
e^z \right)\,\,\vartheta_3\left(\frac{-(h - z) + 2\epsilon^2
t}{\pi}, \frac{-4\epsilon^2 t}{\pi}\right)
  dz}\\
\label{xlogd}&=&  -\epsilon + \frac{o(\epsilon)}{e^{\epsilon x} +
o(\epsilon)} = \mathcal{O}(\epsilon)
\end{eqnarray}
For the $y$ logarithmic derivative we have

\begin{eqnarray}
\nonumber \frac{{u}^\epsilon_{y}}{{u}^\epsilon} &=& \frac{1}{y} -
\frac{\frac{y}{2}\int_0^\infty\frac{dt}{t^\frac{5}{2}}
e^{-((1-\epsilon^2)t+\frac{y^2}{4t})}
  \frac{1}{\ell}\int_{0}^{\ell} w^{\epsilon}(\xi,0)
\vartheta_3\left(\frac{-(x - \xi) + 2\epsilon t}{\ell},
\frac{-4\pi t}{\ell^2}\right)
  d\xi}{\int_0^\infty\frac{dt}{t^\frac{3}{2}}
e^{-((1-\epsilon^2)t+\frac{y^2}{4t})}\frac{1}{\ell}\int_{0}^\ell
w^{\epsilon}(\xi,0) \vartheta_3\left(\frac{-(x- \xi) + 2\epsilon
t}{\ell}, \frac{-4\pi t}{\ell^2}\right)
  d\xi } \\
  \nonumber &=& \frac{1}{y} -
\frac{\frac{y}{2}\int_0^\infty\frac{dt}{t^\frac{5}{2}}
e^{-((1-\epsilon^2)t+\frac{y^2}{4t})}
  \left(e^{\epsilon x} +
o(\epsilon)\right)}{\int_0^\infty\frac{dt}{t^\frac{3}{2}}
e^{-((1-\epsilon^2)t+\frac{y^2}{4t})}\left(e^{\epsilon x} +
o(\epsilon)\right)}\\
\label{ylogd} &=& \frac{1}{y} -
\frac{K_{-\frac{3}{2}}(y)}{K_{-\frac{1}{2}}(y)} + o(\epsilon) = -1
+ o(\epsilon).
\end{eqnarray}
The last equivalence follows from a Bessel recurrence identity
\cite{AS}, formula 9.6.26, together with formula 9.6.6.

Thus we finally have

\begin{eqnarray}
\nabla \theta^\epsilon = \left(
\begin{array}{c}
   0 \\
   -1 \\
\end{array}
\right) + \mathcal{O}(\epsilon)
\end{eqnarray}
and hence $(1-|\nabla\theta^\epsilon|^2)^2 =
\mathcal{O}(\epsilon^2)$. Since the domain $[0,a\ell] \times
\left\{y < M_\epsilon\right\}$ has dimensions $1/\epsilon \times
1/\epsilon$, the total energy in this region is also
asymptotically finite.

\subsubsection{Modification of the Self-dual Test Funciton} It
remains to estimate the energy in the region $[a\ell, \ell] \times
\left\{y < M_\epsilon\right\}$ which has dimensions
$\mathcal{O}(1) \times 1/\epsilon$. The question of the finiteness
of the energy of the $\theta^\epsilon$ we have been considering in
this region is beside the point for general purpose of but, this
self-dual solution does not satisfy the boundary condition
(\ref{6}) in this region.

As stated earlier we are going to modify the self-dual test
function in this region so that the boundary condition (\ref{6})
is satisfied. To that end we fix a small value of $\delta$ and let
$\mathbf{B}(\delta)$ denote the $\delta$-neighborhood of $[a\ell,
\ell]$ in $\mathcal{S}^\epsilon$. We modify $w^\epsilon$ in this
neighborhood as follows. Define
\begin{eqnarray} \label{finalsoln}
   \widetilde{w^\epsilon}(x,y) &=& \phi_1(x,y) w^\epsilon(x,y) +
\phi_2(x,y) w_2(x,y)
\end{eqnarray}
where $\{\phi_1,\phi_2\}$ is a partition of unity subordinate to
the cover of $\mathcal{S}^\epsilon$ given by
\begin{eqnarray*}
   U_1 &=& \mathcal{S}^\epsilon\backslash \mathbf{B}(\delta/2) \\
   U_2 &=& \mathbf{B}(\delta)
\end{eqnarray*}
and $w_2(x,y) = w^\epsilon(x,0)\cosh\left( y\right)$, where
$w^\epsilon(x,0)$ here is defined as in $(\ref{5}^\prime -
\ref{6}^\prime)$. One has
\begin{eqnarray*}
   \phi_1 &=& \left\{\begin{array}{cc}
     1 & U_1\backslash  \mathbf{B}(\delta)\\
     0 & \mathbf{B}(\delta/2) \\
\end{array}\right. \\
   \phi_2 &=& \left\{\begin{array}{cc}
     1 & \mathbf{B}(\delta/2) \\
     0 & U_1\backslash  \mathbf{B}(\delta)\\
    \end{array} \right.
\end{eqnarray*}
and $\phi_1 + \phi_2 \equiv 1$.

It is straightforward to check that $\widetilde{w^\epsilon}(x,y)$
satisfies the boundary conditions (\ref{5}) and (\ref{6}):
\begin{eqnarray*}
   \lim_{y\to 0} \widetilde{w^\epsilon}(x,y) &=& \phi_1(x,0)
w^\epsilon(x,0) + \phi_2(x,0)
   w^\epsilon(x,0)\\
   &=& \left(\phi_1(x,0)+ \phi_2(x,0)\right) w^\epsilon(x,0)\\
   &=& w^\epsilon(x,0)\\
   &=& e^{\epsilon x}
\end{eqnarray*}
for $x\in [0,a\ell]$.

For $x \in [a\ell, \ell]$,
\begin{eqnarray*}
   \lim_{y\to 0} \widetilde{w^\epsilon}_y(x,y) &=& \left(\phi_{1y}(x,0)+
\phi_{2y}(x,0)\right) w^\epsilon(x,0)
    + \phi_2(x,0) w^\epsilon(x,0)\sinh(0) \\
    &=& \left(\phi_1 + \phi_2\right)_y (x,0) w^\epsilon(x,0) + 0 \\
    &=& 0.
\end{eqnarray*}
Thus, $\log \widetilde{w^\epsilon}$ is an admissible test function
for the regularized Cross-Newell variational problem. We can now
estimate the energy of this test function in $\mathbf{B}(\delta)$.
The energy density in this region is bounded and therefore the
energy in $\mathbf{B}(\delta)$ is finite.

\subsubsection{Estimates for the "outer" solution in $\left(
[a\ell,\ell] \times \left\{y < M_\epsilon\right\}\right)$} It
remains to estimate the energy in $\left( [a\ell,\ell] \times
\left\{y < M_\epsilon\right\}\right) \backslash \mathbf{B}(\delta)
$. To proceed with this we will need a more specific definition of
$q_a$ which we now give.

Note first that by our assumption that $1 - a^\epsilon =
\mathcal{O}(\epsilon)$, the interval $[a\ell,\ell]$ remains of
size $\mathcal{O}(1)$ for arbitrarily small values of $\epsilon$.
We will now further pin this down by setting $1-a^\epsilon = c
\epsilon$ for a value of $c$ that is fixed, independent of
$\epsilon$. Consequently, $[a\ell,\ell]$ is now an interval of
fixed length $c\pi$ which can therefore also be represented as
$[\ell - c\pi,\ell]$. Recall that $q_a$ needs to be built so that
on this interval it matches, through second order, to $e^{\epsilon
x}$ at the left endpoint and similarly to $e^{\epsilon x - \pi}$
at the right endpoint. Toward this end we observe that the
required leading order value on the right is $1$, independent of
$\epsilon$ while on the left the leading order value limits to the
stable value of $e^\pi$ as $\epsilon \to 0$.

Choosing a value $\nu > 0$ that is small with respect to $c\pi$,
we define a compressed \textsl{tanh}-profile that interpolates
between the point $ (x_0, y_0) = (\ell - c\pi + \nu, e^\pi +
\gamma)$ and the point $ (x_1, y_1) = (\ell - \nu, 1 - \gamma)$
and where $\gamma > 0$ is another chosen value required to be
smaller than $1$. (This last requirement will insure that the
positivity claim made at the end of subsection \ref{zipper}
holds.) Explicitly this tanh-profile is given by
\begin{eqnarray*}
  T(x) &=& \frac{e^\pi + 1}{2} + \left(\frac{e^\pi - 1}{2} +
  \gamma\right) \tanh \left( \frac{x - (\ell - c\frac{\pi}{2})}{\left(x-(\ell-\nu)\right)\left(x-(\ell - c\pi +
  \nu)\right)}\right).
\end{eqnarray*}
Note that the profile of $T(x)$ is independent of $\epsilon$. The
only way in which $T$ depends on $\epsilon$ is that this profile
translates uniformly with $\ell$ as $\epsilon$ changes. We will
define $q_a(x) = T(x)$ on the subinterval $[x_0, x_1] = [\ell -
c\pi + \nu,\ell - \nu]$ of $[\ell - c\pi,\ell]$.

Next we will define the piece of $q_a(x)$ on the left that
interpolates between the point $(\ell - c\pi, e^{\pi - \epsilon c
\pi})$ and the point$(x_0, y_0)$. Choose a value $\sigma > 0$ that
is small with respect to $\nu$. Consider the covering of $[\ell -
c\pi,\ell - c\pi + \nu]$ by the two sets $V_1 = [\ell - c\pi,x_0 -
\sigma)$ and $V_2 = (\ell - c\pi + \sigma, x_0]$ and let
$\{\psi_1(x), \psi_2(x)\}$ be a partition of unity subordinate to
this cover which means, in particular, that

\begin{eqnarray*}
   \psi_1 &=& \left\{\begin{array}{cc}
     1 & [\ell - c\pi, \ell - c\pi + \sigma)\\
     0 & (x_0 - \sigma, x_0] \\
\end{array}\right. \\
   \psi_2 &=& \left\{\begin{array}{cc}
     1 & (x_0 - \sigma, x_0] \\
     0 & [\ell - c\pi, \ell - c\pi + \sigma). \\
    \end{array} \right.
\end{eqnarray*}
On $[\ell - c\pi, x_0]$ we define
\begin{eqnarray}\label{qa}
  q_a(x) &=& \psi_1(x) e^{\epsilon x} + \psi_2(x) (e^\pi + \gamma).
\end{eqnarray}
It is straightforward to see that with these choices $q_a(x)$ is
smooth throughout $[\ell - c\pi, \ell - \nu)$ and satisfies the
smooth matching conditions on the left. Moreover, it is clear from
the functions comprising (\ref{qa}) that $q_a$ and its derivatives
remain bounded on $[\ell - c\pi, \ell - \nu)$ as $\epsilon \to 0$.
A similar construction may be made on the right; i.e., on $(\ell -
c\pi + \nu, \ell]$. This completes our description of $q_a(x)$.

The study of the convolution integral in (\ref{shiftpersoln2}) in
the region where $x \in [a\ell, \ell]$ now proceeds similarly to
what was done in (\ref{convoest}) and subsequent formulae. In
particular, the analogous result to (\ref{convoest}) is that for
$x \in (a\ell, \ell)$ and for \textit{times} $t$ of order less
than $1/\epsilon$:

\begin{eqnarray}\label{convoest2}
&&\frac{1}{\ell}\int_{0}^{\ell} w^\epsilon(\xi) \,\,
\vartheta_3\left(\frac{- (x-\xi) + 2\epsilon t}{\ell}, \frac{-4\pi
t}{\ell^2}\right)d\xi \\
\nonumber &=&\frac{1}{\ell}\int_{0}^{\ell} q_a(\xi) \,\,
\vartheta_3\left(\frac{- (x-\xi) + 2\epsilon t}{\ell}, \frac{-4\pi
t}{\ell^2}\right)d\xi\\
 \nonumber &+& \frac{1}{\ell}\int_{a\ell}^{\ell}
\left(w^\epsilon(\xi) - q_a(\xi)\right) \,\,
\vartheta_3\left(\frac{- (x-\xi) + 2\epsilon t}{\ell}, \frac{-4\pi
t}{\ell^2}\right)d\xi \\
\nonumber &=& \frac{1}{\pi}\int_{0}^{\pi} q_a(\frac{z}{\epsilon})
\,\,\vartheta_3\left(\frac{-(h - z) + 2\epsilon^2 t}{\pi},
\frac{-4\epsilon^2 t}{\pi}\right) dz\\
\nonumber &+& \frac{1}{\pi}\int_{0}^{\pi}
\left(w^\epsilon(\frac{z}{\epsilon}) - q_a(\frac{z}{\epsilon})
\right) \,\, \vartheta_3\left(\frac{-(h - z) +
2\epsilon^2 t}{\pi},\frac{-4\epsilon^2 t}{\pi}\right) dz \\
\nonumber &=& q_a(x) + o(\epsilon),
\end{eqnarray}
with $q_a(x)$ here bounded away from zero, independent of
$\epsilon$, by our earlier choice of $\gamma$. Hence the
denominators in the estimates analogous to (\ref{xlogd}) and
(\ref{ylogd}) are under control. In the subsequent formulae the
roles of $e^{\epsilon x}$ and $q_a(x)$ are effectively
interchanged as above and all proceeds as before. The result is
that the energy in $\left( [a\ell,\ell] \times \left\{y <
M_\epsilon\right\}\right)\backslash \mathbf{B}(\delta)$ is
asymptotically bounded like $\mathcal{O}(\epsilon)$.

It thus follows that the total energy of our family of test
functions is uniformly bounded in $\epsilon$.

\section{Lower bound} \label{sec:l_bound}

Following the ideas of Jin and Kohn \cite{JK}, we will prove {\em
ansatz-free} lower bounds for the functional $\mc{F}^{\epsilon}$
by identifying vector fields $\Sigma(\nabla\theta)$ such that
$$
\mc{F}^{\epsilon}[\theta;a,\delta] \geq C^{-1} \left|\iint_{\mc{S}^\epsilon}
  \nabla \cdot \Sigma(\nabla \theta) dx dy \right|.
$$
This allows us to obtain information about the energy
$\mc{F}^\epsilon$ purely in terms of the boundary conditions on
$\theta$. To avoid the proliferation of symbols, here and henceforth,
$C, C', C_1$, {\em etc} denote (finite) constants whose precise value
is unimportant, and different occurrences of the same symbol might
denote different values of the constants. $e_1,e_2,K,K_1$, {\em etc}
denote constants that have the same value in all their occurrences.

\begin{defn}
  A smooth vector function $\Sigma(p,q) = (\Sigma_1, \Sigma_2)$ is
  {\em subordinate to the energy} if
\begin{gather}
  \left|\frac{\partial\Sigma_1(p,q)}{\partial p}\right| +
  \left|\frac{\partial\Sigma_1(p,q)}{\partial q} +
    \frac{\partial\Sigma_2(p,q)}{\partial p}\right| + \left|
    \frac{\partial\Sigma_2(p,q)}{\partial q}\right| \leq C |1 - p^2 -
  q^2|
\label{eq:subordinate}
\end{gather}
for some $C < \infty$.
\end{defn}
If $\Sigma$ is subordinate to the energy, it follows that
\begin{align*}
|\nabla \cdot \Sigma(\nabla \theta)| & \leq  \left|\frac{\partial\Sigma_1(p,q)}{\partial p}\right| |\theta_{xx}| +
  \left|\frac{\partial\Sigma_1(p,q)}{\partial q} +
    \frac{\partial\Sigma_2(p,q)}{\partial p}\right||\theta_{xy}| + \left|
    \frac{\partial\Sigma_2(p,q)}{\partial q}\right||\theta_{yy}| \\
& \leq C|1-\theta_x^2 - \theta_y^2| |\nabla \nabla \theta|,
\end{align*}
where we use the identification $p = \theta_x, q = \theta_y$ and
$|\nabla \nabla \theta|^2 = \theta_{xx}^2 + 2 \theta_{xy}^2 +
\theta_{yy}^2$.
Consequently,
\begin{align*}
\mc{F}^{\epsilon}[\theta;a,\delta] & = \iint_{\mc{S}^\epsilon} \left\{[\nabla \nabla \theta]^2
  + (1 - |\nabla \theta|^2)^2\right\} dx dy - 2 \int_{\partial \mc{S}^\epsilon} \theta_x d \theta_y  \\
& \geq 2 \iint_{\mc{S}^\epsilon} |\nabla \nabla \theta| |1 - |\nabla \theta|^2| dx dy - 2 \int_{\partial \mc{S}^\epsilon} \theta_x d \theta_y \\
& \geq  \frac{2}{C} \left|\iint_{\mc{S}^\epsilon} \nabla \cdot
  \Sigma(\nabla \theta) dx dy \right|  - 2 \int_{\partial \mc{S}^\epsilon} \theta_x d \theta_y \\
& \geq  C^{-1} \left|\iint_{\mc{S}^\epsilon} \nabla \cdot
  \Sigma(\nabla \theta) dx dy \right|.
\end{align*}
In obtaining the last equation, we use the fact
that
$$
\int_{\partial \mc{S}^\epsilon} \theta_x d \theta_y = 0
$$
for the boundary conditions in (\ref{eq:bc}).

\begin{lemma}
  There are constants $e_1,K_1 > 0$ such that for all $\epsilon \in
  (0,1]$,$a \in [0,1]$ and $\delta \in \mathbb{R}$, we have
  $$
  \mc{F}^\epsilon[\theta;a,\delta] \geq \frac{e_1 \epsilon^2}{(1-a)^2} -
  K_1\epsilon^2.
$$
\label{lem:squeeze}
\end{lemma}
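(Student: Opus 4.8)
The plan is to apply the $\Sigma$-machinery developed just above the lemma to a carefully chosen vector field $\Sigma$ whose divergence, when integrated over $\mc{S}^\epsilon$, reduces by the divergence theorem to a boundary integral over $y=0$ that detects the mismatch between the Dirichlet data (\ref{eq:bc}) and the asymptotic roll pattern. The key point is that on the Dirichlet portion $al \le x < l$ we know $\theta(x,0)=0$, so $\theta_x(x,0)=0$ there, while on the Neumann portion $0 \le x < al$ we know $\theta_y(x,0)=0$; and at $y=\infty$ we have $\nabla\theta \to (\epsilon, \sqrt{1-\epsilon^2})$ (in the $x$-periodic reduced picture, after accounting for the shift). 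The natural candidate is something like $\Sigma(p,q) = (\sigma(p,q), \tau(p,q))$ with $\sigma$ chosen so that $\Sigma_2$ vanishes appropriately and $\Sigma_1$ restricted to $y=0$ picks out a function of $q=\theta_y$ on the Neumann part and of $p=\theta_x$ on the Dirichlet part; e.g.\ take $\Sigma$ of the form $\Sigma_1 = p(1-\tfrac13 p^2 - q^2)$, $\Sigma_2 = q(1 - p^2 - \tfrac13 q^2)$ or a variant, and check (\ref{eq:subordinate}) directly. One then computes
$$
\iint_{\mc{S}^\epsilon} \nabla\cdot\Sigma(\nabla\theta)\, dx\, dy = \left[\int_0^l \Sigma_2(\nabla\theta)\, dx\right]_{y=0}^{y=\infty} + (\text{periodic $x$-boundary terms that cancel}),
$$
and the $x$-periodicity kills the lateral contributions because $\theta(x,y)-\epsilon x$ is $l$-periodic.

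The main computation is then to evaluate the boundary term at $y=0$ and at $y\to\infty$ and extract the $\epsilon^2/(1-a)^2$ scaling. At $y=\infty$ the integrand $\Sigma_2((\epsilon,\sqrt{1-\epsilon^2}))$ is an explicit $O(1)$ constant times $l = \pi/\epsilon$, but by a good choice of $\Sigma$ (normalizing so that $\Sigma_2$ vanishes on the circle $p^2+q^2=1$ along the relevant value, or subtracting a constant-divergence field) this contribution is either zero or combines cleanly. At $y=0$: on the Neumann part $0 \le x < al$ we have $q=\theta_y=0$, so $\Sigma_2(\theta_x,0)$ is evaluated there; on the Dirichlet part $al \le x < l$ we have $p = \theta_x = 0$, so $\Sigma_2(0, \theta_y)$ appears, but now $\theta_y$ on this short interval of length $l(1-a) = \pi(1-a)/\epsilon$ is unconstrained pointwise — however the *integral* $\int_{al}^l \theta_y(x,0)\,dx$ is controlled: indeed $\int_0^l \theta_y(x,0)\,dx$ can be related via integrating $\theta_{xy}$ or via the structure of the problem to the net phase circulation, and using $\theta_x(x,0)=0$ on the Dirichlet part together with the periodicity. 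The scaling $\epsilon^2/(1-a)^2$ will emerge from a Cauchy–Schwarz / Jensen step: the constraint forces $\theta_y$ to have a prescribed mean on an interval of length $\sim (1-a)/\epsilon$, and balancing this against the quadratic structure in $\Sigma$ (which is cubic in $\nabla\theta$, hence the field value enters squared after optimizing) produces a lower bound proportional to $\epsilon^2/(1-a)^2$, with the additive $-K_1\epsilon^2$ absorbing lower-order cross terms and the $O(1)\cdot\epsilon$-length Dirichlet contributions to the $\Sigma_1$ flux.

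Concretely, the key inequality chain will read: $\mc{F}^\epsilon \ge C^{-1}|\iint \nabla\cdot\Sigma| = C^{-1}\left|\int_{\{y=0\}} \Sigma_2(\nabla\theta) - \int_{\{y=\infty\}}\Sigma_2(\nabla\theta)\right|$, and after choosing $\Sigma$ so that the $y=\infty$ term and the Neumann-part $y=0$ term conspire to leave a remainder governed by $\left|\int_{al}^l g(\theta_y(x,0))\,dx\right|$ for a suitable $g$ with $g(q) \sim q$ near the relevant values, one invokes that this integral is bounded below (via the forced circulation) by something like $c\epsilon^{-1}\cdot(\text{something})$, while a complementary bound on $\int_{al}^l \theta_y^2$ in terms of $\mc{F}^\epsilon$ itself (from the regularizing $(\Delta\theta)^2$ term and a trace inequality on the strip of height $O(1)$) lets one close the loop by AM–GM.

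\medskip

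The step I expect to be the genuine obstacle is controlling the boundary flux on the \emph{Dirichlet} interval $[al,l]$ at $y=0$: there $\theta_y(x,0)$ is not pointwise zero and the interval is short, of length $\pi(1-a)/\epsilon$, so one must produce a lower bound on $\left|\int_{al}^l \Sigma_2(0,\theta_y(x,0))\,dx\right|$ that does \emph{not} presuppose an ansatz. The way I would handle this is to exploit a conserved or topological quantity: the boundary conditions (\ref{eq:bc}) together with $x$-periodicity force $\int_0^l \theta_{xy}\,dx$-type identities, or more directly, integrating $\theta_x$ from $y=0$ to $y=\infty$ along the edges and using that $\theta_x \to \epsilon$ at infinity while $\theta_x=0$ on $[al,l]$ at $y=0$ pins down the total variation of $\theta_x$ in $y$, which by the divergence structure must be "paid for" on the short Dirichlet segment, yielding the $(1-a)^{-2}$ factor after optimizing the distribution of $\theta_y$ over that segment subject to the fixed-mean constraint. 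Everything else — verifying (\ref{eq:subordinate}) for the chosen $\Sigma$, the vanishing of lateral boundary terms, and the final AM–GM — is routine bookkeeping.
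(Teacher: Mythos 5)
Your general framework is the right one --- it is exactly the Jin--Kohn machinery the paper sets up before the lemma: pick $\Sigma$ subordinate to the energy, integrate $\nabla\cdot\Sigma(\nabla\theta)$ by parts, and read off the flux from the boundary data at $y=0$ and $y=\infty$ (note, though, that you have the two boundary pieces swapped: in (\ref{eq:bc}) the Dirichlet condition $\theta(x,0)=0$, hence $\theta_x(x,0)=0$, holds on the \emph{long} interval $0\le x<al$, and the Neumann condition $\theta_y(x,0)=0$ holds on the \emph{short} interval $al\le x<l$ of length $(1-a)\pi/\epsilon$). The genuine gap is in how you propose to extract the $\epsilon^2/(1-a)^2$ scaling. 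With a \emph{fixed} cubic entropy such as $\Sigma_1=p(1-\tfrac13p^2-q^2)$, $\Sigma_2=q(1-p^2-\tfrac13q^2)$, the subordination constant $C$ in (\ref{eq:subordinate}) is an absolute constant and every term of the boundary flux is bounded by $C'\cdot l=C'\pi/\epsilon$ uniformly in $a$; the divergence identity therefore can never produce a lower bound that blows up as $a\to1$. Your fallback --- a ``forced mean'' of $\theta_y$ on the short segment plus Cauchy--Schwarz/Jensen --- does not rescue this: on $[al,l)$ the condition is Neumann, so $\theta_y(x,0)\equiv0$ there and there is nothing to average, while on the Dirichlet part $\theta_y(x,0)$ has no prescribed mean. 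No circulation identity of the kind you sketch pins it down.

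The missing idea in the paper's proof is to let the vector field itself depend on $b=(1-a)/\epsilon$: one takes $\Sigma_2(p,q)=p\,\phi(b^2p^2)$ (with $\Sigma_1$ chosen as a $q$-antiderivative so that $\Sigma_{1,p}=\Sigma_{2,q}=0$), where $\phi$ is compactly supported, $\phi(0)=1$, and $p\mapsto p\phi(p^2)$ peaks at $p=1$. This buys two things simultaneously. First, the subordination constant degrades to $Cb^2$, so $|\iint\nabla\cdot\Sigma|\le\tfrac{Cb^2}{2}\mc{F}^\epsilon$. Second, the flux stays bounded below by a fixed positive constant: the contribution from $y=\infty$ is $\epsilon\phi((1-a)^2)\cdot\pi/\epsilon\approx\pi$, the Dirichlet segment contributes nothing because $\Sigma_2(0,q)=0$, and the Neumann segment of length $b\pi$ contributes at most $b\pi\cdot\sup_p p\phi(b^2p^2)=\pi\phi(1)<\pi\phi(0)$ --- the $1/b$ decay of the sup exactly cancels the length $b\pi$. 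Dividing the $O(1)$ net flux by the $Cb^2$ subordination constant gives $\mc{F}^\epsilon\gtrsim b^{-2}=\epsilon^2/(1-a)^2$, which is the statement. Without this $b$-dependent rescaling of $\Sigma$, the argument as you outline it cannot reach the claimed bound.
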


\begin{proof}

Let $\phi \geq 0$ be a smooth, compactly supported function such that
\begin{align*}
& \phi(0)  = 1, \\
& \phi(1) < \phi(0),\\
& f(p)  = p \phi(p^2) \text{ has a single maximum at } p=1.
\end{align*}
An explicit example of a function $\phi$ whith these properties is
$$
\phi(p) = \begin{cases} \exp\left[\frac{1}{2} - \frac{1}{(2-p)(p+1)} - \frac{p}{4} \right] & p \in (-1,2) \\
0 & \text{ otherwise }
\end{cases}
$$

 Let
$b = (1-a)/\epsilon$. Define the vector field $\Sigma(p,q)$ by
\begin{align*}
\Sigma_2(p,q) & = p \phi(b^2 p^2) \\
\Sigma_1(p,q) & = - \int_0^q \left[\phi(b^2 (1-\eta^2)) + 2(b^2(1-\eta^2)\phi'(b^2(1-\eta^2)) \right] d\eta.
\end{align*}

Since $\phi$ has compact support, it follows that $\Sigma$ is bounded
on $\mathbb{R}^2$. An explicit calculation shows that the quantities
$\Sigma_{1,p}$ and $\Sigma_{2,q}$ are zero.  Also,
\begin{align*}
 \left|\frac{\partial\Sigma_1(p,q)}{\partial q} + \frac{\partial\Sigma_2(p,q)}{\partial p}\right| & = \left|\phi(b^2 p^2) + 2 b^2 p^2 \phi'(b^2 p^2) - \phi(b^2 (1-q^2)) \right.\\
&\left. - 2b^2(1-q^2)\phi'(b^2(1-q^2)\right| \\
& \leq C b^2 |(1-p^2 - q^2)|
\end{align*}
where
$$
C = \sup_{x,y}\left|\frac{\phi(x) + 2 x \phi'(x) - \phi(y) - 2 y \phi'(y)}{x-y}\right| \leq \sup_z |3 \phi'(z) + 2 z \phi''(z)|
$$
is clearly finite since $\phi$ is compactly supported and twice
differentiable. This proves that $\Sigma$ is subordinate to the
energy.

Since $\nabla \cdot \Sigma(\nabla \theta) = (\Sigma_{2,p} +
\Sigma_{1,q}) \theta_{xy}$ we obtain
\begin{align}
\left|\iint \nabla \cdot \Sigma(\nabla \theta)\, dx dy\right| & \leq Cb^2 \iint| (1 - \theta_x^2 - \theta_y^2) \theta_{xy}| \, dx dy \nonumber \\
& \leq \frac{C b^2}{2} \left[ \iint (1 - \theta_x^2 - \theta_y^2)^2\, dx dy + \iint [\nabla \nabla \theta]^2\, dx dy \right]  \nonumber \\
& = \frac{Cb^2}{2} \mc{F}^\epsilon[\theta;a,\delta] \label{eq:lbnd1}
\end{align}

Integrating by parts, we have
\begin{align*}
\iint \nabla \cdot \Sigma(\nabla \theta)\, dx dy = &
\Sigma_2(\epsilon,\sqrt{1-\epsilon^2}) \frac{\pi}{\epsilon} - \int_0^{a
  \pi/\epsilon} \Sigma_2(0,\theta_y(x,0))
dx \\
&  - \int_{a \pi/\epsilon}^{\pi/\epsilon}
\Sigma_2(\theta_x(x,0),0)dx,
\end{align*}
where the contributions from the boundaries at $x = 0$ and $x =
\pi/\epsilon$ cancel due to the periodicity. By construction,
$\Sigma_2(p,0) = 0$ and $\Sigma_2(p,q)$ has a maximum value at $p =
1/b$.  Consequently,
\begin{align*}
  \int_0^{a \pi/\epsilon}
  \Sigma_2(0,\theta_y(x,0)) dx & = 0 \\
  \int_{a \pi/\epsilon}^{\pi/\epsilon} \Sigma_2(\theta_x(x,0),0) dx &
  \leq \frac{(1 - a) \pi}{\epsilon} \frac{\phi(1)}{b} = \pi \phi(1).
\end{align*}
Also, $\phi(0) = 1$ and $\phi$ is Lipschitz so that
$$
\Sigma_2(\epsilon,\sqrt{1-\epsilon^2}) = \epsilon \phi(b^2
\epsilon^2) = \epsilon \phi((1-a)^2) \geq \epsilon(1 - C' (1-a)^2),
$$
for some finite $C'$. Combining
these estimates with (\ref{eq:lbnd1}), we obtain
\begin{equation}
\mc{F}^\epsilon[\theta;a,\delta] \geq \frac{2 \pi}{C b^2}\left[\phi(0) - \phi(1) - C'(1-a)^2 \right],
\end{equation}
 and rewriting $b$ in terms of $a$ and $\epsilon$ yields
the desired conclusion.
\end{proof}

The above lemma shows that the energy grows without bound as the
quantity $(1-a)$ becomes small. However, we do not have {\em a priori}
control on the size of $(1-a)$. Consequently, to obtain a lower bound
for the energy, we need a complementary estimate which shows that the
energy grows as the quantity $(1-a)$ becomes large.

To prove this result, we first construct a vector field $\Sigma$
subordinate to the energy functional as follows:

Let $\psi \geq 0$ be a smooth, compactly supported function such that
\begin{align*}
& \psi(0)  = 1 \\
& \int_0^{\infty} (1-\xi^2)\psi(\xi^2) d\xi = 0
\end{align*}

We can always construct such a function, given $\chi \geq 0$, a
compactly supported function with $\chi(0) = 1$. Observe that
$$
\int_0^\infty (1 - \xi^2) \chi\left(\frac{\xi^2}{\eta^2}\right) d
\xi = \eta(A_0 - \eta^2 A_1),
$$
where $A_0,A_1 > 0$. Consequently, by an appropriate choice of
$\eta$, we get $\psi(x) = \chi(x/\eta^2)$ with the required
properties.

We define the functions $\zeta(q^2)$ and $\sigma(q^2)$ by
\begin{align}
\zeta(q^2) & = \int_0^q (q - \eta) \psi(\eta^2) d \eta \nonumber \\
\sigma(q^2) & = \int_0^q (q - \eta) (1 - \eta^2) \psi(\eta^2) d \eta
\label{eq:zeta_sigma}
\end{align}
Note that the functions $\zeta$ and $\sigma$ are well defined for
positive values of their arguments, that is the expressions on the
right hand sides of the above equations are even functions of $q$.
From these expressions, we have
\begin{align*}
\frac{\partial^2}{\partial q^2} \zeta(q^2) = \psi(q^2); & \quad \zeta(0) = 0 \\
\frac{\partial^2}{\partial q^2} \sigma(q^2) = (1-q^2)\psi(q^2); & \quad \sigma(0) = 0
\end{align*}
We will also use the same letters $\zeta$ and $\sigma$ to denote
smooth extensions of the functions defined above to all of
$\mathbb{R}$. We will pick extensions such that the supports of
$\zeta$ and $\sigma$ are contained in $[-1,\infty)$.

Let $\varphi \geq 0$ be a compactly supported function and set
\begin{align}
  V(p,q) = & \, \, \varphi(p^2)\left[\sigma(q^2) - p^2\zeta(q^2)\right] \nonumber \\
  & - \int_0^p (p-\xi)\left\{ \sigma(1-\xi^2) \left[
      \frac{\partial^2}{\partial \xi^2} \varphi(\xi^2) \right] -
    \zeta(1-\xi^2) \left[ \frac{\partial^2}{\partial \xi^2} (\xi^2
      \varphi(\xi^2)) \right] \right\}\, d\xi.
\label{eq:defnv}
\end{align}

$V$ is now an even function of $p$ and $q$. Define the vector field
$\Sigma$ by
\begin{equation}
\Sigma(p,q) = \left( -\frac{\partial}{\partial p} V,\frac{\partial}{\partial q} V \right)
\label{eq:def}
\end{equation}
From (\ref{eq:def}) it follows that
$$
\frac{\partial\Sigma_1(p,q)}{\partial q} + \frac{\partial\Sigma_2(p,q)}{\partial p} = 0.
$$
Also,
$$
\left|\frac{\partial\Sigma_2(p,q)}{\partial q}\right| =
\left|\frac{\partial^2 V(p,q)}{\partial q^2}\right| =
|\varphi(p^2)\psi(q^2)||1-p^2-q^2| \leq C_1 |1-p^2-q^2|.
$$
With $C_1 = \sup|\varphi(p^2) \psi(q^2)| < \infty$.  Finally, an
explicit calculation yields
\begin{align}
\left|\frac{\partial\Sigma_1(p,q)}{\partial p}\right| & = \left|\left[
      \frac{\partial^2}{\partial p^2} \varphi(p^2) \right]\left( \sigma(q^2) - \sigma(1-p^2) \right) - \left[
      \frac{\partial^2}{\partial p^2} p^2 \varphi(p^2) \right]\left( \zeta(q^2) - \zeta(1-p^2) \right)\right| \nonumber \\
& \leq C_2 |1 - p^2 - q^2|
\end{align}
where $C_2$ can be bounded in terms of the support of $\varphi$ and the
maximum values of $|\varphi|, |\varphi'|, |\varphi''|, |\zeta'|$ and
$|\sigma'|$. Clearly $\varphi$ and all it's derivatives are uniformly
bounded since it is smooth and compactly supported. From
(\ref{eq:zeta_sigma}), we have
\begin{align*}
\zeta'(q^2) & = \frac{1}{2q}\int_0^q  \psi(\eta^2) d \eta \\
\sigma'(q^2) & = \frac{1}{2q}\int_0^q (1 - \eta^2) \psi(\eta^2) d \eta
\end{align*}
Since $\psi$ is compactly supported, these derivatives vanish as
$q^2\rightarrow \infty$, implying that $\zeta'$ and $\sigma'$ are
bounded for all positive values of the argument. Since $\sigma$ and
$\zeta$ are smooth and are identically zero if their arguments are
sufficiently negative, it follows that $C_2 < \infty$. It thus follows
that the vector field $\Sigma$ is subordinate to the energy
functional.

For future use, let us record a few observations that follow directly
from the construction:
\begin{observation}
$\Sigma_2(p,0) = 0$ since $\Sigma_2$ is an odd function of $q$.
\end{observation}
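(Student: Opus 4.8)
The plan is to read the conclusion off directly from the parity of $V$ established during its construction. Recall that immediately after (\ref{eq:defnv}) we observed that $V(p,q)$ is an even function of $q$ (indeed of both variables): in (\ref{eq:defnv}) the dependence on $q$ enters only through $\sigma(q^2)$ and $\zeta(q^2)$, while the second (integral) term does not involve $q$ at all, and $\sigma(\cdot), \zeta(\cdot)$ are evaluated at the nonnegative argument $q^2$. Since $\Sigma_2 = \partial V/\partial q$ by (\ref{eq:def}), and the $q$-derivative of an even function of $q$ is odd in $q$, we conclude $\Sigma_2(p,-q) = -\Sigma_2(p,q)$ for all $(p,q)$. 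Evaluating at $q = 0$ forces $\Sigma_2(p,0) = -\Sigma_2(p,0)$, hence $\Sigma_2(p,0) \equiv 0$.

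If one prefers an explicit verification to the symmetry argument, differentiating (\ref{eq:defnv}) gives $\Sigma_2(p,q) = 2q\,\varphi(p^2)\left[\sigma'(q^2) - p^2\zeta'(q^2)\right]$, where $\sigma', \zeta'$ are the bounded functions computed from (\ref{eq:zeta_sigma}); this expression manifestly carries a factor of $q$ and hence vanishes at $q = 0$. Either route makes the statement immediate, so there is no genuine obstacle here — the only point worth noting is that the smooth extensions of $\zeta$ and $\sigma$ off the nonnegative reals play no role, since in $\Sigma_2(p,q)$ these functions are only ever evaluated at $q^2 \ge 0$.
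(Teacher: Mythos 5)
Your argument is correct and is exactly the paper's: $V$ in (\ref{eq:defnv}) is even in $q$ because $q$ enters only through $\sigma(q^2)$ and $\zeta(q^2)$, so $\Sigma_2=\partial V/\partial q$ is odd in $q$ and hence vanishes at $q=0$. The explicit formula $\Sigma_2(p,q)=2q\,\varphi(p^2)\left[\sigma'(q^2)-p^2\zeta'(q^2)\right]$ is a correct, if redundant, confirmation.
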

\begin{observation}
$$
\Sigma_{2,q}(0,q) = V_{qq}(0,q) = \psi(q^2)(1-q^2).
$$
Consequently, the non-degenerate critical points are at $ q = \pm
1$. Differentiating in $q$, we get
$$\Sigma_{2,qq}(0,\pm 1) = \mp 2 \psi(1),$$
so that $\Sigma_2(0,q)$ has a maximum at $q = 1$ and a minimum at $q = -1$.

Finally,
$$\Sigma_2(0,q) = 2 q \sigma'(q^2) = \int_0^q (1-\xi^2) \psi(\xi^2) d
\xi,$$
so that $\Sigma_2(0,1) > 0$ and $\Sigma_2(0,q) \rightarrow 0$
as $q \rightarrow \infty$.
\end{observation}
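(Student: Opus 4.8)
The plan is to verify the assertions of the Observation directly from the definitions (\ref{eq:defnv}) and (\ref{eq:def}), together with the identities for $\zeta,\sigma$ recorded just after (\ref{eq:zeta_sigma}). The structural point to exploit is that \emph{all} of the $p$-dependence of $V$ beyond the explicit prefactor $\varphi(p^2)$ is carried by the integral $\int_0^p (p-\xi)\{\cdots\}\, d\xi$, and this integral vanishes identically at $p=0$ regardless of its integrand. Hence, with the (harmless) normalisation $\varphi(0)=1$, evaluating (\ref{eq:defnv}) at $p=0$ gives $V(0,q)=\varphi(0)\sigma(q^2)=\sigma(q^2)$, and therefore by (\ref{eq:def}) $\Sigma_2(0,q)=\partial_q V(0,q)=\partial_q\sigma(q^2)$ and $\Sigma_{2,q}(0,q)=V_{qq}(0,q)=\partial_q^2\sigma(q^2)$.

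First I would record the $q$-derivatives of $\sigma(q^2)$. From the integral representation in (\ref{eq:zeta_sigma}), $\sigma(q^2)=\int_0^q (q-\eta)(1-\eta^2)\psi(\eta^2)\,d\eta$; differentiation under the integral sign is legitimate and the boundary contribution at $\eta=q$ drops because of the factor $(q-\eta)$, so $\partial_q\sigma(q^2)=\int_0^q (1-\eta^2)\psi(\eta^2)\,d\eta$ and $\partial_q^2\sigma(q^2)=(1-q^2)\psi(q^2)$. This immediately yields the two displayed equalities $\Sigma_{2,q}(0,q)=(1-q^2)\psi(q^2)$ and $\Sigma_2(0,q)=2q\,\sigma'(q^2)=\int_0^q (1-\xi^2)\psi(\xi^2)\,d\xi$.

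Next comes the critical-point analysis of $g(q):=\Sigma_2(0,q)$. Its derivative $(1-q^2)\psi(q^2)$ vanishes at $q=\pm 1$; differentiating once more, $\Sigma_{2,qq}(0,q)=-2q\psi(q^2)+2q(1-q^2)\psi'(q^2)$, and the second term is annihilated at $q=\pm 1$ by the factor $(1-q^2)$, leaving $\Sigma_{2,qq}(0,\pm1)=\mp 2\psi(1)$. Provided $\psi(1)>0$ these are non-degenerate, with a local maximum at $q=1$ and a local minimum at $q=-1$. To pass from local to global and to pin down the sign of $\Sigma_2(0,1)$ and the decay at infinity, observe that $g$ is odd (the integrand $(1-\xi^2)\psi(\xi^2)$ is even), that $g'(q)=(1-q^2)\psi(q^2)\geq 0$ for $|q|\leq 1$ and $\leq 0$ for $|q|\geq 1$ since $\psi\geq 0$, and that $g(q)\to \int_0^\infty (1-\xi^2)\psi(\xi^2)\,d\xi=0$ as $q\to\infty$ by the vanishing-moment normalisation on $\psi$. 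Thus $g$ increases on $[-1,1]$ and decreases on $[1,\infty)$, so $q=1$ is the global maximum with value $g(1)=\int_0^1 (1-\xi^2)\psi(\xi^2)\,d\xi>0$ (the integrand is nonnegative and strictly positive near $\xi=0$, where $\psi(0)=1$); oddness then gives the global minimum $g(-1)=-g(1)$ at $q=-1$, and $\Sigma_2(0,q)\to 0$ as $q\to\infty$.

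The whole argument is bookkeeping; the only place needing a word of care is the non-degeneracy claim, which genuinely requires $\psi(1)>0$ in addition to $\psi(0)=1$ and the condition $\int_0^\infty(1-\xi^2)\psi(\xi^2)\,d\xi=0$. I would fold this into the construction of $\psi$: starting from $\chi\geq 0$ that is \emph{strictly positive throughout its support} with $\chi(0)=1$, the rescaling $\psi(x)=\chi(x/\eta^2)$ with $\eta^2$ chosen to kill the first moment satisfies $1/\eta^2=A_1/A_0$, which lies strictly inside the support of $\chi$, so $\psi(1)=\chi(1/\eta^2)>0$ automatically. With that minor strengthening in place, every claim in the Observation follows as above.
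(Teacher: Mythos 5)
Your verification is correct and is exactly the computation the paper intends: at $p=0$ the integral term in the definition of $V$ vanishes and (with $\varphi(0)=1$) everything reduces to the recorded identities $\partial_q\sigma(q^2)=\int_0^q(1-\eta^2)\psi(\eta^2)\,d\eta$ and $\partial_q^2\sigma(q^2)=(1-q^2)\psi(q^2)$, from which the critical-point analysis, the sign of $\Sigma_2(0,1)$, and the decay at infinity follow as you describe. Your added remark that non-degeneracy requires $\psi(1)>0$, and that this can be guaranteed in the rescaling construction by taking $\chi$ strictly positive on the interior of its support, is a legitimate (minor) tightening of a hypothesis the paper leaves implicit.
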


$M = \int_0^1 (1-\xi^2) \psi(\xi^2) d \xi$ will denote the maximum
value of $\Sigma_2(0,q)$.

\begin{observation}
\begin{align*}
\Sigma_2(\epsilon,\sqrt{1-\epsilon^2}) & = \varphi(\epsilon^2) \int_0^{\sqrt{1-\epsilon^2}} (1- \epsilon^2 -\xi^2) \psi(\xi^2) d \xi \\
& \geq M - K \epsilon^2
\end{align*}
for a constant $K < \infty$. In
obtaining the last line, we use
\begin{align*}
|1-\varphi(\epsilon^2)| & \leq C_1 \epsilon^2 \\
\left|\int_0^{\sqrt{1-\epsilon^2}} \psi(\xi^2) d \xi \right| & \leq C_2 \\
\left|\int_{\sqrt{1-\epsilon^2}}^1 (1-\xi^2) \psi(\xi^2) d \xi \right| & \leq C_3 \epsilon^2
\end{align*}
for some bounded constants $C_1,C_2,C_3$.
\label{obs:estimate}
\end{observation}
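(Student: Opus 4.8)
The plan is to first collapse $\Sigma_2$ to a single transparent integral and then run an elementary expansion in $\epsilon$ about $\epsilon=0$. From (\ref{eq:defnv}) the second (integral) term of $V$ is a function of $p$ alone, so it is annihilated by $\partial_q$; hence, by (\ref{eq:def}),
\[
\Sigma_2(p,q) = V_q(p,q) = \varphi(p^2)\big[\,\partial_q\sigma(q^2) - p^2\,\partial_q\zeta(q^2)\,\big].
\]
Using the formulas for $\zeta'$ and $\sigma'$ recorded above, $\partial_q\sigma(q^2) = 2q\sigma'(q^2) = \int_0^q(1-\eta^2)\psi(\eta^2)\,d\eta$ and $\partial_q\zeta(q^2) = 2q\zeta'(q^2) = \int_0^q\psi(\eta^2)\,d\eta$, so that
\[
\Sigma_2(p,q) = \varphi(p^2)\int_0^q\big(1-p^2-\eta^2\big)\psi(\eta^2)\,d\eta .
\]
Setting $p=\epsilon$ and $q=\sqrt{1-\epsilon^2}$ reproduces the first displayed equality in the Observation.

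Next I would peel off the leading term. Writing the integrand as $(1-\eta^2)\psi(\eta^2)-\epsilon^2\psi(\eta^2)$ and extending the first piece to $[0,1]$,
\[
\int_0^{\sqrt{1-\epsilon^2}}\big(1-\epsilon^2-\eta^2\big)\psi(\eta^2)\,d\eta = M - \int_{\sqrt{1-\epsilon^2}}^1(1-\eta^2)\psi(\eta^2)\,d\eta - \epsilon^2\int_0^{\sqrt{1-\epsilon^2}}\psi(\eta^2)\,d\eta,
\]
where $M = \int_0^1(1-\xi^2)\psi(\xi^2)\,d\xi>0$ is precisely the maximal value of $\Sigma_2(0,\cdot)$ identified above. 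The three bounds quoted in the statement are then routine: on $[\sqrt{1-\epsilon^2},1]$ one has $0\le 1-\eta^2\le\epsilon^2$ and the interval has length $1-\sqrt{1-\epsilon^2}=\epsilon^2/(1+\sqrt{1-\epsilon^2})\le\epsilon^2$, so with $\psi$ bounded the middle integral is at most $\|\psi\|_\infty\epsilon^4\le C_3\epsilon^2$; the last integral is over a subinterval of $[0,1]$ on which $\psi$ is bounded, hence it is $\le\|\psi\|_\infty=:C_2$; and $|1-\varphi(\epsilon^2)|\le C_1\epsilon^2$ because $\varphi$ is smooth with $\varphi(0)=1$, hence Lipschitz in its argument near $0$.

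Combining the three estimates gives $\int_0^{\sqrt{1-\epsilon^2}}(1-\epsilon^2-\eta^2)\psi(\eta^2)\,d\eta\ge M-(C_2+C_3)\epsilon^2$, and then multiplying by $\varphi(\epsilon^2)=1-(1-\varphi(\epsilon^2))$ and using $0\le\varphi\le\|\varphi\|_\infty$ together with $M>0$ yields
\[
\Sigma_2(\epsilon,\sqrt{1-\epsilon^2}) \ge M - \big(C_1M+\|\varphi\|_\infty(C_2+C_3)\big)\epsilon^2 = M - K\epsilon^2,
\]
which is the claim. There is no genuine obstacle here: the only step demanding a little care is the first one---recognizing that the $p$-only integral term in (\ref{eq:defnv}) drops out under $\partial_q$, so that $\Sigma_2$ reduces to the single clean integral above---after which everything is one-variable bookkeeping near $\epsilon=0$.
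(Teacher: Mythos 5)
Your proposal is correct and follows essentially the same route as the paper: the first equality comes from differentiating $V$ in $q$ (the $p$-only integral term in (\ref{eq:defnv}) dropping out), and the lower bound is obtained from exactly the three estimates the paper cites, namely $|1-\varphi(\epsilon^2)|\leq C_1\epsilon^2$, the boundedness of $\int_0^{\sqrt{1-\epsilon^2}}\psi(\xi^2)\,d\xi$, and the $O(\epsilon^2)$ tail $\int_{\sqrt{1-\epsilon^2}}^1(1-\xi^2)\psi(\xi^2)\,d\xi$. The only added value is that you make explicit the derivation of the closed form for $\Sigma_2(p,q)$ and the bookkeeping of the constants, which the paper leaves implicit.
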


\begin{lemma}
  There are constants $e_2,K_2 > 0$ such that for all $\epsilon \in
  (0,1]$, $a \in [0,1]$ and $\delta \in \mathbb{R}$, we have
$$
\mc{F}^\epsilon[\theta;a,\delta] \geq \frac{e_2(1-a)}{\epsilon} - K_2 \epsilon.
$$
\label{lem:extend}
\end{lemma}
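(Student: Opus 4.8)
The plan is to run the same ansatz-free lower bound argument as in Lemma~\ref{lem:squeeze}, but now with the fixed vector field $\Sigma$ constructed just above (the one that does \emph{not} rescale with $\epsilon$). Since $\Sigma$ has been shown to be subordinate to the energy with a finite, $\epsilon$-independent subordination constant $C$ (built out of $C_1$ and $C_2$), and since the boundary conditions (\ref{eq:bc}) force $\int_{\partial\mc{S}^\epsilon}\theta_x\,d\theta_y=0$, the chain of inequalities established immediately before Lemma~\ref{lem:squeeze} gives
\[
\mc{F}^\epsilon[\theta;a,\delta]\ \geq\ C^{-1}\left|\iint_{\mc{S}^\epsilon}\nabla\cdot\Sigma(\nabla\theta)\,dx\,dy\right|.
\]
So it suffices to bound this divergence integral from below in terms of $(1-a)/\epsilon$.

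To do that I would integrate by parts over the half-cylinder exactly as in Lemma~\ref{lem:squeeze}: the contributions from $x=0$ and $x=\pi/\epsilon$ cancel by periodicity, the boundary at $y\to\infty$ contributes $(\pi/\epsilon)\,\Sigma_2(\epsilon,\sqrt{1-\epsilon^2})$ because $\nabla\theta\to(\epsilon,\sqrt{1-\epsilon^2})$ there, and the $y=0$ boundary splits into the Dirichlet part $0\le x< a\pi/\epsilon$, where $\theta_x(x,0)=0$, and the Neumann part $a\pi/\epsilon\le x\le\pi/\epsilon$, where $\theta_y(x,0)=0$. This yields
\[
\iint_{\mc{S}^\epsilon}\nabla\cdot\Sigma(\nabla\theta)\,dx\,dy
=\frac{\pi}{\epsilon}\,\Sigma_2(\epsilon,\sqrt{1-\epsilon^2})
-\int_0^{a\pi/\epsilon}\Sigma_2(0,\theta_y(x,0))\,dx
-\int_{a\pi/\epsilon}^{\pi/\epsilon}\Sigma_2(\theta_x(x,0),0)\,dx.
\]
Now I invoke the three Observations: the last integral vanishes because $\Sigma_2(p,0)=0$; the middle integral is at most $M\,a\pi/\epsilon$ because $\Sigma_2(0,q)\le M$ for \emph{every} real $q$ (this is exactly where the normalization $\int_0^\infty(1-\xi^2)\psi(\xi^2)\,d\xi=0$, which forces $\Sigma_2(0,q)\to0$ as $q\to\infty$, is essential); and $\Sigma_2(\epsilon,\sqrt{1-\epsilon^2})\ge M-K\epsilon^2$ by Observation~\ref{obs:estimate}. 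Combining these, $\iint_{\mc{S}^\epsilon}\nabla\cdot\Sigma(\nabla\theta)\,dx\,dy\ \geq\ M\pi(1-a)/\epsilon-K\pi\epsilon$.

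To finish I would handle the absolute value by a dichotomy: if $M\pi(1-a)/\epsilon-K\pi\epsilon\ge0$ then the lower bound just obtained for the divergence integral survives the absolute value and the first display yields $\mc{F}^\epsilon\ge C^{-1}\big(M\pi(1-a)/\epsilon-K\pi\epsilon\big)$; if it is negative, the asserted inequality holds trivially since $\mc{F}^\epsilon\ge0$. Either way, setting $e_2=M\pi/C>0$ and $K_2=K\pi/C>0$ proves the lemma. Most of this is bookkeeping identical to Lemma~\ref{lem:squeeze}; the one genuinely delicate point — and the only place a crude estimate would fail — is the uniform bound $\Sigma_2(0,q)\le M$ on the Dirichlet portion of $y=0$, where $\theta_y(x,0)$ is entirely uncontrolled, so one must already know that $\Sigma_2(0,\cdot)$ has a finite global maximum. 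That property is precisely what the construction of $\psi$, $\sigma$, $\zeta$ and $\Sigma$ above was designed to secure.
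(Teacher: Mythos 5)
Your proposal is correct and follows essentially the same route as the paper: the same integration by parts over the half-cylinder, the same use of $\Sigma_2(p,0)=0$ on the Neumann part, the uniform bound $\Sigma_2(0,q)\leq M$ on the Dirichlet part (where $\theta_y(x,0)$ is uncontrolled), and Observation~\ref{obs:estimate} for the contribution at $y\to\infty$. Your explicit dichotomy for removing the absolute value and your remark that the normalization $\int_0^\infty(1-\xi^2)\psi(\xi^2)\,d\xi=0$ is what makes $\Sigma_2(0,\cdot)$ globally bounded by $M$ are both correct and merely make explicit what the paper leaves implicit.
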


\begin{remark}
  For the case $a = 0$, corresponding to the self-dual minimizers,
  this estimate captures the right scaling of the minimum energy as
  $\epsilon \rightarrow 0$.
\end{remark}

\begin{proof}
  The proof of the lemma follows from estimating a lower bound for the
  functional $\mc{F}^\epsilon[\theta;a,\delta]$ using the vector field
  $\Sigma$ that we constructed above.

For the vector field $\Sigma$ we have
\begin{align*}
\iint \nabla \cdot \Sigma(\nabla \theta)\, dx dy = &
\Sigma_2(\epsilon,\sqrt{1-\epsilon^2}) \frac{\pi}{\epsilon} - \int_0^{a
  \pi/\epsilon} \Sigma_2(0,\theta_y(x,0)) dx \\
& - \int_{a
  \pi/\epsilon}^{\pi/\epsilon} \Sigma_2(\theta_x(x,0),0) dx.
\end{align*}
As before, the contributions from the boundaries at $x = 0$ and $x
= \pi/\epsilon$ cancel due to the periodicity. By construction,
$\Sigma_2(p,0) =0$ and $\Sigma_2(0,q)$ has a maximum value $M$ at $q =
1$.  Consequently,
$$
\int_0^{a \pi/\epsilon}
\Sigma_2(0,\theta_y(x,0)) dx \leq \frac{M a \pi}{\epsilon}
$$
From observation~\ref{obs:estimate}, we obtain
$$
\Sigma_2(\epsilon,\sqrt{1-\epsilon^2}) \frac{\pi}{\epsilon} \geq \frac{M \pi}{\epsilon} - K \pi  \epsilon,
$$
Since $\Sigma$ is subordinate to the energy,
\begin{equation}
\mc{F}^\epsilon[\theta;a,\delta] \geq \frac{M \pi}{C \epsilon}\left[1 - a  - K \epsilon^2 \right],
\end{equation}
which yields the desired conclusion.
\end{proof}

We can now prove theorem~\ref{thm:l_bound} using
lemma~\ref{lem:squeeze} and lemma~\ref{lem:extend}.

\begin{proof}

  Let $b$ denote the quantity $(1-a)/\epsilon$. From
  lemma~\ref{lem:squeeze} we get
  $$
  \mc{F}^\epsilon[\theta;a,\delta] \geq \frac{e_1}{b^2} - K_1 \epsilon^2
  \geq 3 \left(e_1 e_2^2\right)^{1/3} - 2 e_2 b - K_1 \epsilon^2
$$
where the last inequality comes from linearizing the convex
function $e_1 b^{-2}$ at $b = (e_1/e_2)^{1/3}$. Combining this estimate
with the conclusion of lemma~\ref{lem:extend}, we get
$$
\mc{F}^\epsilon[\theta;a,\delta] \geq \max\left(3 \left(e_1
    e_2^2\right)^{1/3} - 2 e_2 b - K_1 \epsilon^2, e_2 b - K_2 \epsilon
\right) \geq (e_1 e_2^2)^{1/3} - \frac{2 K_2 \epsilon +K_1 \epsilon^2
}{3}.
$$
If we set
$$
\epsilon_* = \min\left( \frac{(e_1 e_2^2)^{1/6}}{\sqrt{K_1}},
  \frac{(e_1 e_2^2)^{1/3}}{2 K_2}\right),
$$
for all $\epsilon < \epsilon_*$, all $a \in [0,1]$ and all $\theta$
satisfying the boundary conditions in (\ref{eq:bc}), we have
$$
\mc{F}^{\epsilon}[\theta;a,\delta] \geq \frac{(e_1 e_2^2)^{1/3}}{3} \equiv E_1.
$$

Combining the upper bound
$\mc{F}^{\epsilon}[\theta^\epsilon;a^\epsilon,\delta^\epsilon] \leq E_0$ in
theorem~\ref{thm:u_bound} with the lower bounds for $\mc{F}^\epsilon$
in lemma~\ref{lem:squeeze} and lemma~\ref{lem:extend}, it follows that
for
$$\epsilon < \min\left(\sqrt{\frac{E_0}{K_1}},\frac{E_0}{K_2}\right),$$
we have
$$
\sqrt{\frac{e_1}{2E_0}} < \frac{1-a^\epsilon}{\epsilon} < \frac{2 E_0}{e_2}.
$$
Consequently,
$$
1 - \alpha_2 \epsilon <  a^\epsilon < 1 - \alpha_1 \epsilon,
$$
for sufficiently small $\epsilon$ with $\alpha_1 =
\sqrt{e_1/(2E_0)}$ and $\alpha_2 = 2 E_0/e_2$.
\end{proof}

\section{Numerical Results} \label{sec:results}

In this section, we will present the results of numerical
simulations that illustrate and clarify our analysis of the energy
and also the structure of the minimizers for the regularized
Cross-Newell energy $\mathcal{F}^\epsilon$ within the class of
functions given by (\ref{eq:bc}).

For our numerical simulations, we restrict ourself to the finite
domain, $\mc{R}^{\epsilon} = \{(x,y) \, |\, 0 \leq x \leq
l = \pi/\epsilon, 0 \leq y \leq L\}$, where $L \gg 1$ is a length scale
much larger than the typical wavelength of the pattern. The boundary
conditions in (\ref{eq:bc}) which are appropriate for the
semi-infinite strip $\mc{S}^\epsilon$ are modified for the
finite domain as follows --
\begin{gather}
\theta(x,0) = 0 \quad \text{ for } 0 \leq x < a l; \nonumber \\
\theta_y(x,0) = 0 \quad \text{ for } a l \leq x < l; \nonumber \\
\theta(x,y) - \epsilon x \text{ is periodic in $x$ with period $l$ for each } y \in [0,L]; \nonumber \\
\theta(x,L) = \left[\epsilon x + \sqrt{1 - \epsilon^2} L + \delta\right]
 \label{eq:num_bc}
\end{gather}

  It is rather straightforward to show that there exist $\theta^\epsilon \in
  H^2(\mc{R}^\epsilon)$ satisfying (\ref{eq:num_bc}) for an
  $a^\epsilon \in [0,1)$ and $\delta^\epsilon \in \mathbb{R}$
  minimizing the functional
$$
\mc{F}^\epsilon[\theta;a,\delta] = \iint_{\mc{R}^\epsilon} \left\{[
  \Delta \theta]^2 + (1 - |\nabla \theta|^2)^2 \right\} dx dy.
$$

The existence of a minimizer is immediate from the following lemma:

\begin{lemma}
  Let $0 \leq a < 1$, and $\delta \in \mathbb{R}$ be given. $\rho_j \in
  L^2(\mc{R}^\epsilon)$ is a sequence of functions that converges
  weakly to zero. $H^2_{per}$ denotes the completion of periodic (in
  $x$) functions on $\mc{R}^\epsilon$ with respect to the $H^2$ norm.
  If $\theta_j \in H^2_{per}(\mc{R}^\epsilon)$ is a sequence
  satisfying (in the sense of trace)
\begin{gather}
  \Delta \theta_j = \rho_j \nonumber \\
  \theta_j(x,0) = 0 \quad \text{ for } 0 \leq x < a l; \nonumber \\
  \partial_y \theta_j(x,0) = 0 \quad \text{ for } a l \leq x < l; \nonumber \\
  \theta(x,L) = 0
\end{gather}
it follows that, up to extraction of a subsequence and
relabelling, we have $\nabla \theta_j \rightarrow 0$ in
$L^4(\mc{R}^\epsilon,\mathbb{R}^2)$.
\end{lemma}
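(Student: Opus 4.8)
The plan is to run the direct-method compactness argument in three steps: (i) an energy estimate, got by testing the equation $\Delta\theta_j=\rho_j$ against $\theta_j$ itself, bounds $\{\theta_j\}$ in $H^1$; (ii) elliptic regularity upgrades this to a uniform $H^2$ bound; (iii) Rellich compactness, together with the identification of the limit, forces $\nabla\theta_j\to 0$ in $L^4$.

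For step (i): since $\{\rho_j\}$ converges weakly in $L^2$ it is norm-bounded, say $\|\rho_j\|_{L^2}\le R$. Let $\mc{V}$ be the space of $H^1_{per}(\mc{R}^\epsilon)$ functions vanishing on the Dirichlet part $\Gamma_D=\{y=0,\ 0\le x<al\}\cup\{y=L\}$ of the boundary. The weak form of the problem is $\iint_{\mc{R}^\epsilon}\nabla\theta_j\cdot\nabla v=-\iint_{\mc{R}^\epsilon}\rho_j\,v$ for all $v\in\mc{V}$: the boundary terms drop out because $v=0$ on $\Gamma_D$, $\partial_y\theta_j=0$ on the Neumann part $\Gamma_N=\{y=0,\ al\le x<l\}$, and the contributions at $x=0$ and $x=l$ cancel by periodicity (this uses $\theta_j\in H^2$ to legitimize Green's identity). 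The hypotheses place $\theta_j$ itself in $\mc{V}$, so choosing $v=\theta_j$ gives $\|\nabla\theta_j\|_{L^2}^2=-\iint\rho_j\theta_j\le\|\rho_j\|_{L^2}\|\theta_j\|_{L^2}$. Since $\Gamma_D\supset\{y=L\}$ is nonempty the Poincar\'e inequality $\|\theta_j\|_{L^2}\le C_P\|\nabla\theta_j\|_{L^2}$ holds on $\mc{V}$, so $\|\nabla\theta_j\|_{L^2}\le C_PR$ and $\{\theta_j\}$ is bounded in $H^1_{per}(\mc{R}^\epsilon)$.

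For step (ii): with the periodic identification $\mc{R}^\epsilon$ is a smooth cylinder carrying Dirichlet data on $\{y=L\}$ and mixed Dirichlet/Neumann data on $\{y=0\}$, the boundary type changing only at (at most) the two points $x=0$ and $x=al$. Away from those two points, classical boundary elliptic regularity for the flat Dirichlet and Neumann pieces of $\{y=0\}$ and for $\{y=L\}$ gives local control $\|\theta_j\|_{H^2}\le C(\|\rho_j\|_{L^2}+\|\theta_j\|_{H^1})$. The genuine difficulty, and the step I expect to be the crux, is the behaviour at a Dirichlet--Neumann transition: the solution operator $f\mapsto(\text{solution of }\Delta u=f\text{ with these boundary conditions})$ does \emph{not} map $L^2$ into $H^2$, the obstruction being the singular profile $r^{1/2}\sin(\vartheta/2)$, which lies in $H^{3/2-\eta}$ but not in $H^2$. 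What rescues the estimate is that $\theta_j$ is \emph{given} to be in $H^2_{per}$: writing the Grisvard decomposition near each transition point, $\theta_j=\theta_j^{\mathrm{reg}}+\sum_k c_j^{(k)}S_k$ with $\|\theta_j^{\mathrm{reg}}\|_{H^2}\le C(\|\rho_j\|_{L^2}+\|\theta_j\|_{H^1})$, $|c_j^{(k)}|\le C\|\rho_j\|_{L^2}$ and $S_k\notin H^2$, membership $\theta_j\in H^2$ forces each stress-intensity coefficient $c_j^{(k)}$ to vanish. Hence $\|\theta_j\|_{H^2}\le C(\|\rho_j\|_{L^2}+\|\theta_j\|_{H^1})\le C'R$, uniformly in $j$. (Equivalently: on the closed subspace of $H^2_{per}$ cut out by the homogeneous mixed conditions, $\Delta$ is injective with closed range, so $\|u\|_{H^2}\lesssim\|\Delta u\|_{L^2}+\|u\|_{H^1}$ holds there by the bounded inverse theorem, although the analogous estimate fails for general $L^2$ data.)

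For step (iii): since $\{\theta_j\}$ is bounded in $H^2_{per}(\mc{R}^\epsilon)$, both $\{\theta_j\}$ and $\{\nabla\theta_j\}$ are bounded in $H^1$; as $H^1\hookrightarrow\hookrightarrow L^4$ in two dimensions, a subsequence (not relabelled) satisfies $\theta_j\to\theta^*$ in $L^2$ and $\nabla\theta_j\to\nabla\theta^*$ in $L^4(\mc{R}^\epsilon,\mathbb{R}^2)$. In $\|\nabla\theta_j\|_{L^2}^2=-\iint\rho_j\theta_j=-\iint\rho_j\theta^*-\iint\rho_j(\theta_j-\theta^*)$ the first term tends to $0$ because $\rho_j\rightharpoonup 0$ and the second is at most $R\,\|\theta_j-\theta^*\|_{L^2}\to 0$, so $\|\nabla\theta_j\|_{L^2}\to 0$. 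Since $\nabla\theta_j$ also converges in $L^4$, hence in $L^2$ on the bounded domain, the common limit is $0$, i.e.\ $\nabla\theta_j\to 0$ in $L^4(\mc{R}^\epsilon,\mathbb{R}^2)$ along the subsequence, which is the claim. To summarize: steps (i) and (iii) are the routine machinery of the direct method, while the uniform $H^2$ bound of step (ii) — in the presence of the mixed-boundary transition points — is the main obstacle, overcome by exploiting that the $\theta_j$ are a priori in $H^2$ so that the singular boundary components are automatically absent.
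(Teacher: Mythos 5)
Your proposal is correct and follows essentially the same route as the paper's own (two-sentence) proof: a uniform $H^2(\mc{R}^\epsilon)$ bound via elliptic regularity for the mixed boundary value problem, followed by the compact embedding $H^2(\mc{R}^\epsilon)\hookrightarrow W^{1,4}(\mc{R}^\epsilon)$. You simply supply the details the paper leaves implicit --- the $H^1$ energy estimate, the treatment of the Dirichlet--Neumann transition points (where the a priori membership $\theta_j\in H^2_{per}$ is indeed what rules out the $r^{1/2}$ singular components), and the identification of the $W^{1,4}$ limit as zero from $\rho_j\rightharpoonup 0$.
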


\begin{proof} Elliptic regularity along with the given boundary conditions
  implies that the sequence $\theta_j$ is bounded in
  $H^2(\mc{R}^\epsilon)$. The compactness of the embedding
  $H^2(\mc{R}^\epsilon) \hookrightarrow W^{1,4}(\mc{R}^\epsilon)$
  \cite{Evans} proves the lemma.
\end{proof}

If $\tilde{\theta}_j$ is an infimizing sequence for
$\mc{F}^\epsilon[\theta;a,\delta]$ subject to the boundary conditions
in (\ref{eq:num_bc}), then let $\theta_j = \tilde{\theta}_j -\varphi$,
where $\varphi$ is a smooth function on $\mc{R}^\epsilon$ satisfying
the boundary conditions in (\ref{eq:num_bc}). It then follows from the
form of $\mc{F}^\epsilon$ and the fact that $\tilde{\theta}_j$ is
infimizing that $\Delta \theta_j$ is a bounded sequence in $L^2$, and
so converges weakly to a limit $\rho^*$. Applying the compactness
result of the preceding lemma with reference to the sequence $\rho_j =
\Delta \theta_j - \rho^*$, we obtain the existence of a minimizier for
the functional $\mc{F}^\epsilon[\theta;a,\delta]$ for a fixed $a$ and
$\delta$.

Note that, for a given $a$ it is easy to construct smooth
transformations $\psi_t : \mc{R}^\epsilon \rightarrow \mc{R}^\epsilon$
such that $\psi_0$ is the identity, if $\theta$ satisfies the boundary
conditions in (\ref{eq:num_bc}), then $\theta \circ \psi_t$ satisfies
the same boundary conditions with the fraction of the boundary with a
Dirichlet boundary condition equaling $a(1+t)$. Further, the energy
$\mc{F}^\epsilon[\theta \circ \psi_t;a(1+t),\delta]$ is a smooth
function of $t$ for sufficiently small $t$. A standard argument now
implies that, for a given $\delta$ the map
$$
a \mapsto \inf_{\theta} \mc{F}^\epsilon[\theta;a,\delta]
$$
is continuous for $a \in (0,1)$. A similar argument shows that the
map is also continuous at $a = 0$. In Lemma~\ref{lem:squeeze} we
showed that $\liminf_{a \rightarrow
  1} \mc{F}^\epsilon[\theta;a,\delta] = \infty$. Combining these
results, we see that
$$
\inf_{a,\theta} \mc{F}^\epsilon[\theta;a,\delta] = \inf_a \left[\inf_{\theta} \mc{F}^\epsilon[\theta;a,\delta]\right].
$$

We now consider variations $\theta \rightarrow \theta_t = \theta + t
\chi(y/L)$, where $\chi$ is a smooth function vanishing identically on
$[0,1/3]$ and equal to 1 on $[2/3,1]$. The functions $\theta_t$
satisfy the boundary conditions in (\ref{eq:num_bc}), except the
asymptotic phase shift is given by $\delta + t$.  A similar argument
as above shows that the map
$$
\delta \mapsto \inf_{\theta} \mc{F}^\epsilon[\theta;a,\delta]
$$
is continuous and it is easy to see that $
\mc{F}^\epsilon[\theta;a,\delta] \rightarrow \infty$ as $\delta
\rightarrow \pm \infty$. In particular, this proves the existence
of an optimal $\delta$, and combining with the results from above,
we see that the minimizer $\theta^{\epsilon}, a^\epsilon,
\delta^{\epsilon}$, can be obtained by successive minimization in
each of the factors.


This suggests the following discretization for the functional
$\mc{F}^\epsilon$, which should converge as the grid spacings $\eta,\zeta
\rightarrow 0$. We define a grid by $x_i = i \eta, i = 0,1,2,\ldots,m-1,
y_j = j \zeta, j = 0,1,2,\ldots,n$, where $\eta = l/m, \zeta = L/n$. The
discretization of the test function $\theta(x,y)$ is
$$
\theta_{i,j} = \theta(i \eta,j \zeta)
$$
We define the difference operator $\delta^{\pm}_x$ by
$$
(\delta^{\pm}_x \theta)_{i,j} = \pm\frac{\theta_{i \pm 1,j} - \theta_{i,j}}{\eta}
$$
with similar definitions for $\delta^{\pm}_y$. In terms of the
discretization, the boundary conditions are
\begin{gather}
\theta_{i,0} = 0 \quad \text{ for } 0 \leq i < k; \nonumber \\
\delta_y^+\theta_{i,0} = 0 \quad \text{ for } k \leq i < m; \nonumber \\
\theta_{m,j} = \theta_{0,j} + \pi \quad j = 0,1,2,\ldots,n \\
\theta_{i,n} = \frac{\pi i}{m} + \sqrt{1-\epsilon^2} L + \delta \quad i = 0,1,2,\ldots m-1
 \label{eq:discr_bc}
\end{gather}
and the Energy functional is discretized as
$$
\mc{F}^\epsilon \approx \eta \zeta \sum_{i = 0}^{m-1} \sum_{j = 0}^{n}
\left[ (\delta_x^+ \delta_x^-  + \delta_y^+ \delta_y^-) \theta_{i,j}\right]^2 + \left[ \frac{(\delta_x^+ \theta)_{i,j}^2 +(\delta_x^- \theta)_{i,j}^2 +(\delta_y^+ \theta)_{i,j}^2 +(\delta_y^- \theta)_{i,j}^2 }{2} - 1 \right]^2
$$
Computing this functional requires assigning values for
$\theta_{i,j}$ with $i = -1, j = -1$ and $j = n+1$. The values for $i
= -1$ are obtained form the shift-periodicity of $\theta$ by
$\theta_{-1,j} = \theta_{m-1,j} - \pi$. The values for T $j = n+1$ are
assigned using the Dirichlet boundary condition $\theta_{i,n+1} =
\frac{\pi i}{m} + \sqrt{1-\epsilon^2} (L+\zeta) + \delta$. This functional is
minimized using MATLAB's conjugate-gradient minimization.

Fig.~\ref{fig:hyster} shows the results from minimizing the RCN
energy over the pattern $\theta$ and also the phase shift
$\delta$, for different values of $\epsilon$, and for a range of
values of $a \approx k/m$. The results do indeed suggest that the
(partial) minimization with respect to the pattern and the
asymptotic phase yields a functional that {\em depends
continuously} on $a$. Further, this functional has first-order
(discontinuous) phase transition at a bifurcation value
$\epsilon^*$, below which the global minimizer has $a \neq 0$.

\begin{figure}[htbp]

\centerline{\includegraphics[width = 0.7\hsize,viewport = 0 0 675 478,clip]{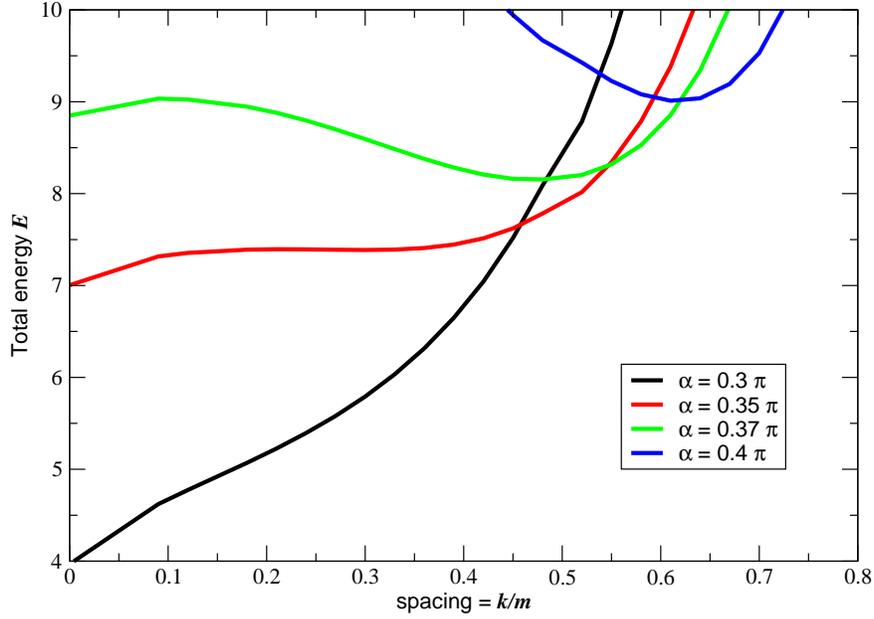}}
\caption{Minimum energy as a function of the separation between the convex and the concave disclinations.}
\label{fig:hyster}
\end{figure}

Fig.~\ref{fig:energy} shows the energy of the minimizer (minimizing
over the pattern, asymptotic phase and the parameter $a$) as a
function of $\epsilon$. Note that the minimum energy is a
non-differentiable function of $\epsilon$, as one would expect for a
first-order phase transition.

\begin{figure}[htbp]

\centerline{\includegraphics[width = 0.7\hsize,viewport = 0 0 675
478,clip]{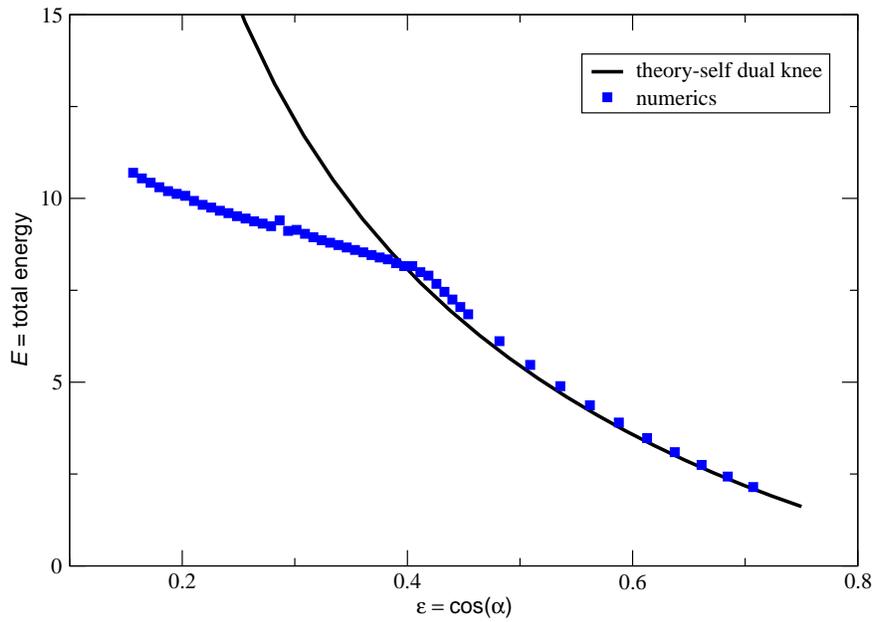}} \caption{Global minimum of the
regularized Cross-Newell energy.} \label{fig:energy}
\end{figure}

Figure~\ref{fig:patterns} show the numerically obtained minimizing
patterns at various values of $\epsilon$. Note that, for sufficiently
large $\epsilon$, the minimizers are the knee-solutions
(\ref{eq:chevrons}) with $a = 0$, whereas for sufficiently small
$\epsilon$, the minimizers have convex-concave disclination pairs, and
have $a \neq 0$. 

\begin{figure}[htbp]

\includegraphics[angle=90]{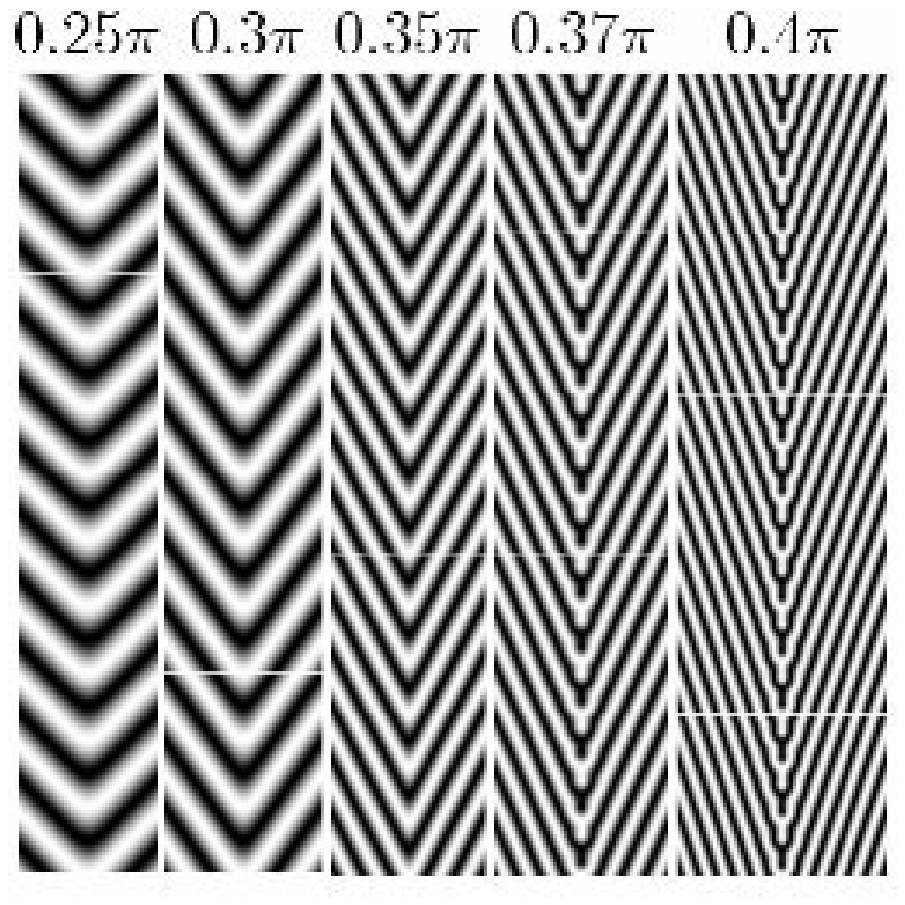}



\label{fig:patterns}
\caption{ The Cross-Newell zippers. These are numerically obtained
minimizing patterns for various choices of the asymptotic angle
$\alpha$.  Note that $\epsilon = \cos \alpha$. The bifurcation from
the knee solution to solutions with disclinations occurs between
$\alpha = 0.35 \pi$ and $\alpha = 0.37 \pi$.
}
\end{figure}

\bigskip

\thanks{\noindent \textbf{Acknowledgements:} N. M. Ercolani was supported in part by NSF grant DMS-0073087;
S.C. Venkataramani was supported in part by an NSF CAREER Award DMS--0135078.}

\bibliographystyle{amsplain}

\end{document}